\newtheorem{theorem}{Theorem}
\newtheorem{lemma}[theorem]{Lemma}
\newtheorem{corollary}[theorem]{Corollary}
\begin{document}

\title{Hyperopic Cops and Robbers}
\author{A.\ Bonato}
\author{N.E.\ Clarke}
\author{D.\ Cox}
\author{S.\ Finbow}
\author{F.\ Mc Inerney}
\author{M.E.\ Messinger}

\keywords{graphs, Cops and Robbers, cop number, invisible robber, planar graphs, graph densities}
\thanks{The authors gratefully acknowledge support from NSERC. The fifth author is partially supported by ANR project Stint under reference ANR-13-BS02-000,
the ANR program ``Investments for the Future'’ under reference ANR-11-LABX-0031-01, and the Inria Associated Team AlDyNet.}
\subjclass[2010]{05C57, 05C85, 05C10}

\begin{abstract}
We introduce a new variant of the game of Cops and Robbers played on graphs, where the robber is invisible unless outside the neighbor set of a cop. The hyperopic cop number is the corresponding
analogue of the cop number, and we investigate bounds and other properties of this parameter. We characterize the cop-win graphs for this variant, along with graphs with the largest possible
hyperopic cop number. We analyze the cases of graphs with diameter 2 or at least 3, focusing on when the hyperopic cop number is at most one greater than the cop number. We show that for planar
graphs, as with the usual cop number, the hyperopic cop number is at most 3. The hyperopic cop number is considered for countable graphs, and it is shown that for connected chains of graphs, the
hyperopic cop density can be any real number in $[0,1/2].$
\end{abstract}

\maketitle

\section{Introduction}

In the game of Cops and Robbers, the robber is visible throughout the game. Perfect information, however, may be less realistic; it is possible that only certain moves make the robber visible.
Several pursuit and evasion games have been studied with imperfect information and this is a common theme in graph searching; see \cite{by,fomin} for surveys. A recent variant \cite{vis} of Cops and
Robbers fixes a visibility threshold for the cops, where the cops can only see vertices within their $k$th neighborhood, for a fixed non-negative integer $k$.

We consider a variant of Cops and Robbers where the cops can only see vertices \emph{not} in their neighbor set. Hence, the robber remains invisible when they are ``close enough''; that is, within
distance one of all the cops. A motivation for this comes from considering adversarial networks, where edges denote competition, enmity, or rivalry between vertices. For example, the market graph is
studied in \cite{market}, where stocks are adjacent if they are negatively correlated. Other examples of adversarial networks are discussed in \cite{market,ILAT,guo}. The robber, as a rogue agent,
cannot be seen while moving among adversarial vertices.  We refer to this as \emph{Hyperopic Cops and Robbers}, as hyperopia is the condition of farsightedness.

The game of Hyperopic Cops and Robbers is defined on (reflexive) graphs more precisely as follows. There are two players, with one player controlling a set of \emph{cops}, and the second controlling
a single \emph{robber}. We use inclusive, gender neutral pronouns ``they'' and ``them'' when referring to the players (and specify the players as cops or robber if they are less clear from context).
Unlike in Cops and Robbers, the cops play with imperfect information: the robber may be invisible to the cops during gameplay. The robber, however, sees the cops at all times. The game is played over
a sequence of discrete time-steps; a \emph{round} of the game is a move by the cops together with the subsequent move by the robber. The cops and robber occupy vertices, and when players are ready to
move during a round, they may each move to a neighboring vertex. The cops move first, followed by the robber; thereafter, the players move on alternate steps. Players can \emph{pass}, or remain on
their own vertices by moving on loops. Observe that any subset of cops may move in a given round. The robber is invisible if and only if they are adjacent to all the cops. If the robber occupies the
same vertex as a cop, then they are visible.

The cops win if, after some finite number of rounds, a cop occupies the same vertex as the robber. This is called a \emph{capture}. The robber wins if they can evade capture indefinitely. Further, we
insist that there be \emph{no chance} in capturing the robber; that is, in the final move, the cops' moves guarantee a cop occupies the vertex of the robber. To illustrate this point, consider one
cop playing on a triangle $K_3$, first occupying a vertex $u.$ The robber occupies a vertex distinct from $u$ and is invisible; further, the cop does not know which vertex to move for capture. Hence,
one hyperopic cop is insufficient to capture the robber on $K_3.$

Note that if a cop is placed at each vertex, then the cops see the robber in the initial round and capture them in the next round. Therefore, the minimum number of cops required to win in a graph $G$
is a well-defined positive integer, named the \emph{hyperopic cop number} of the graph $G$.  The notation $c_H(G)$ is used for the hyperopic cop number of a graph $G$. If $c_H(G)=k,$ then $G$ is
$k$-\emph{hyperopic cop-win}. In the special case $k=1,$ $G$ is \emph{hyperopic cop-win}.

For a graph $G$, it is evident that $c(G) \le c_H(G)$, where $c(G)$ denotes the cop number in the traditional game of Cops and Robbers. Further, for a disconnected graph $G$, there must be a cop in each
component; hence, the robber is always visible, and so we have that $c_H(G)=c(G)$. For that reason, we only consider connected graphs.

While a priori it might appear that there should be a relationship between the hyperopic cop number and the $1$-visibility cop number $c_{v,1}$ from \cite{vis} (as hyperopic cops only see the robber
on the complements of their neighbor sets), there is no elementary relationship between the parameters. In particular, there are graphs $G$ such that $c_H(G) \neq c_{v,1}(\overline{G}),$ where
$\overline{G}$ is the complement of $G$. To see this, note that $c_H(K_n)=\left\lceil \frac n2 \right\rceil$ (which we will prove in Theorem~\ref{tmax}), while $c_{v,1}(\overline {K_n})=n$. Hence,
there exist families of graphs $G$ such that $|c_H(G)-c_{v,1}(\overline {G})|$ is arbitrarily large. An example of a connected graph where its complement is also connected and these two parameters
differ is $K_{m,n}-e$ for $m,n\geq 3$, where $e$ is any fixed edge. In this case, $c_H(K_{m,n}-e)=2$ and $c_{v,1}(\overline {K_{m,n}-e})=1$.

The paper is organized as follows. In Section~\ref{seccw}, we first characterize the hyperopic cop-win graphs and then give general upper bounds depending on properties of the graph. We consider the
cases for diameter at least 3 graphs and diameter 2 graphs in the next section, focusing on results where $c_H$ is at most one larger than the cop number. In Section~\ref{secplan}, we prove that the
hyperopic cop number is at most three for planar graphs and two for outerplanar graphs, paralleling analogous results for the cop number. We finish with a discussion of hyperopic densities of
infinite graphs. Our main result in Section~\ref{secdense} is that hyperopic densities with connected chains may be any real number in $[0,1/2]$, which is in stark contrast to the cop densities of
connected chains which are always 0.

All graphs we consider are connected and undirected, unless otherwise stated. As referenced above, our graphs are reflexive, but we do not allow multiple edges. All our graphs are finite, except in
Section~\ref{secdense}. For background on graph theory, see \cite{west}.

\section{Hyperopic cop-win graphs and bounds}\label{seccw}
We begin by characterizing the graphs with hyperopic cop number equaling 1.

\begin{theorem}\label{ttree}
For a graph $G$, $c_H(G)=1$ if and only if $G$ is a tree.
\end{theorem}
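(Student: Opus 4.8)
The plan is to prove both implications by reasoning about the cop's \emph{belief set} $B$, namely the collection of all vertices at which the (possibly invisible) robber could currently be located, given the history of observations and assuming the robber plays to maximize ambiguity. Since a lone cop occupies a single vertex, a guaranteed capture can only occur when the cop makes a move onto a vertex $w$ with $B\subseteq\{w\}$; when the robber is visible this forces $B$ to be a singleton, and when the robber is invisible we have $B\subseteq N(u)\setminus\{u\}$ for the cop's vertex $u$. Moreover, a visible robber is at distance at least $2$, so the cop is never adjacent to it and cannot capture on that move. The whole proof therefore reduces to controlling $|B|$: the cop wins exactly when it can force $B$ to a singleton \emph{while adjacent to that vertex}, and the robber survives by preventing this.

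For the direction ``$G$ a tree $\Rightarrow c_H(G)=1$,'' I would have the cop begin at a leaf $\ell$ with unique neighbour $w$. If the robber starts invisible it must occupy $w$ (the only vertex in $N(\ell)\setminus\{\ell\}$), and otherwise it is visible; either way $B$ is a singleton after placement. The cop then follows the standard tree strategy, moving along the unique path toward the known robber vertex $r$. The key point is that invisibility never inflates $B$ on a tree: if after the cop's move to $u'$ the robber is invisible, the consistent positions are $N[r]\cap N(u')$, and any two distinct elements of this set together with $u'$ and $r$ would form a triangle, contradicting acyclicity; a short case analysis on $d(r,u')\in\{1,2\}$ (and the impossibility of invisibility for $d(r,u')\ge 3$) shows $B$ stays a singleton. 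Thus the cop always knows $r$ exactly, and the usual confinement argument applies: the cop's vertex separates $T$, the robber cannot cross it, and the subtree containing the robber strictly shrinks until the robber is cornered and captured with certainty.

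For the converse, ``$G$ not a tree $\Rightarrow c_H(G)\ge 2$,'' I would fix a shortest cycle $C=v_0v_1\cdots v_{g-1}$, which is necessarily chordless, and design a robber strategy confined to $C$ that maintains the invariant: \emph{whenever the cop is adjacent to the robber, $|B|\ge 2$}. The representative computation is the triangle $g=3$: if the cop sits on a triangle vertex $x$, the other two vertices $y,z$ are both adjacent to $x$ and mutually adjacent, so the robber can occupy one while the other remains a consistent ``twin,'' giving $B\supseteq\{y,z\}$; when the cop steps from $x$ onto $y$, the real robber survives at $z$ and the twin re-spreads to $\{z,x\}$, so $|B|\ge 2$ persists and the cop's move never captures. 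On longer chordless cycles the same idea is realized by a ``stay adjacent, keep both cycle-neighbours consistent'' strategy, using that non-adjacent cycle vertices cannot coincide in $B$-collapsing ways. Since $|B|\ge 2$ holds at every moment the cop could capture, the cop can never guarantee a capture, so $c_H(G)\ge 2$.

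The hard part will be the converse, specifically ruling out that off-cycle vertices let the cop resolve the ambiguity or shortcut the robber's evasion: a cop sitting on or moving through a vertex outside $C$ that is adjacent to several cycle vertices could, in principle, collapse the twin positions or capture during a transition when the cop is not yet adjacent. I would control this using the structure of a shortest cycle: because $C$ is chordless and of minimum length, any external vertex adjacent to two cycle vertices would create a cycle of length $2+d_C(v_i,v_j)\ge g$, which bounds how many (and how close) cycle vertices an external vertex can touch, and in particular forces external vertices to be adjacent to at most one cycle vertex once $g\ge 5$. Handling the small-girth cases $g\in\{3,4\}$ and the bookkeeping at transitions—ensuring the robber can always re-establish two consistent candidates before the cop becomes adjacent—is the main technical obstacle, and it is precisely where invisibility does genuine work, since the triangle shows a graph with $c(G)=1$ can still have $c_H(G)=2$.
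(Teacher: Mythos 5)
Your forward direction is correct and is essentially the paper's argument: start the cop at a leaf, observe that on a tree invisibility never creates ambiguity about the robber's location (two distinct consistent positions would close a cycle through the cop's and the robber's previous vertices), and then run the standard tree pursuit until the robber is cornered. The converse, however, is not a proof: you explicitly defer what you call ``the main technical obstacle'' --- the girth-$3$ and girth-$4$ cases and the bookkeeping at transitions --- and these are exactly the cases where the claim needs an argument (e.g.\ $K_3$, $K_4$, and wheels, where the cop can be adjacent to the entire cycle at once). Without handling them you have not established $c_H(G)\ge 2$ for a general non-tree $G$, so as written the converse is a genuine gap.

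The obstacle is largely self-inflicted. You do not need a shortest or chordless cycle, and you do not need to maintain a global invariant on the belief set $B$ against off-cycle cop positions; you only need the robber to survive each round. Fix any cycle $H$ and have the robber commit to $H$. After the cop's move in a given round: if the cop is not adjacent to the robber, the robber passes and is not captured. If the cop is adjacent, then either the robber has a neighbour on $H$ not adjacent to the cop --- move there and survive --- or the cop is adjacent to the robber and to both of its $H$-neighbours. In the latter case every choice among passing or moving to either $H$-neighbour leaves the robber invisible, so at least two distinct positions are consistent with the cop's information after the robber moves; a single cop occupies one vertex and, under the no-chance rule, cannot guarantee capture against two candidates. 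Induction on rounds finishes the argument, and off-cycle vertices never enter the analysis because the case split depends only on adjacency between the cop's current vertex (wherever it is) and the robber's closed neighbourhood on $H$. This is the route the paper takes, and it disposes of the small-girth cases you left open.
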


\begin{proof}
Let $G$ be a tree. The cop begins on a end-vertex $u$. If the robber is invisible, they must be located on the unique neighbor $v$ of $u$.  The cop wins by moving to $v$. Hence, in the initial round
we may assume the robber is visible.

In all subsequent rounds, if the robber becomes invisible, then the cop sees which of their neighbors the robber moves to and captures them. Otherwise, the robber remains visible (when either passing
or moving). As the rounds progress, the distance between the cop and robber monotonically decreases, and eventually the robber occupies an end-vertex whose unique neighbor is occupied by the cop. The
robber becomes invisible in this stage of the game, but the cop knows their location and captures them.

Now suppose that $G$ contains a cycle $H$. We show that however the cop moves, the robber can survive the round by either moving or passing on the cycle. The proof then follows by induction.

If the cop is not adjacent to the robber, then the robber passes.   If the cop is adjacent to the robber and it is the robber's turn to move, then either there is a neighbor of the robber on $H$ that
is not adjacent to the cop (and the robber moves there and is safe for another round), or the cop is adjacent to all the neighbors of the robber on $H$. In this case, the cop cannot guarantee capture
as the robber can occupy one of two or more possible positions on the next move. The robber wins as no chance is allowed when the cop captures the robber.
\end{proof}

In the next result, we bound $c_H$ by a value close to the cop number.

\begin{theorem}\label{ttf}
Let $G$ be a graph.
\begin{enumerate}
\item If $G$ contains a cut vertex, then $c_H(G)\leq c(G)+1$.
\item If $G$ is triangle-free, then $c_H(G)\leq c(G)+1$.
\end{enumerate}
\end{theorem}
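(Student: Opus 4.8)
The plan is to prove both parts with a single high-level idea: run an optimal strategy for the ordinary game with $c(G)$ cops, and use one additional cop to prevent (or resolve) the robber's invisibility, so that the $c(G)$ cops effectively play with full information and capture the robber by Theorem~\ref{ttree}-style reasoning for the usual game. The two hypotheses supply two different mechanisms for the extra cop.

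For part (2), I would keep the extra cop permanently on a vertex adjacent to one of the $c(G)$ cops running the optimal strategy; call that cop $C_1$. The invariant ``$C_1$ and the extra cop occupy distinct adjacent vertices $x \sim y$'' is easy to maintain: if $C_1$ moves from $x$ to $x'$ the extra cop moves onto $x$ (adjacent to $x'$), if $C_1$ passes the extra cop stays, and if $C_1$ moves onto the extra cop's vertex the two simply swap. Since $G$ is triangle-free, $x$ and $y$ have no common neighbour, so no vertex other than $x,y$ is adjacent to both; hence an uncaptured robber is never adjacent to all the cops and is therefore always visible. With the robber permanently visible, the $c(G)$ cops just execute their optimal strategy for the usual game and win, the final capture being forced with no chance since the robber's position is known. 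I expect this part to be routine; the only things to verify are the initial placement (put the extra cop adjacent to $C_1$, possible as $G$ is connected and nontrivial) and that the extra cop never needs to share $C_1$'s vertex.

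For part (1), let $v$ be a cut vertex, let $V_1,\dots,V_k$ be the vertex sets of the components of $G-v$, and set $G_i = G[V_i \cup \{v\}]$. The map fixing $G_i$ pointwise and sending every vertex outside $V_i$ to $v$ is a retraction of $G$ onto $G_i$ (no edge joins distinct $V_j$'s, since $v$ separates them), and as the cop number is monotone under retracts we get $c(G_i)\le c(G)$ for every $i$. My strategy is to have the extra cop guard $v$: it starts there and never leaves. This confines the robber to the single set $V_i$ containing its start vertex, since entering $v$ would mean capture, and it also makes the robber visible whenever the robber is not adjacent to $v$. The remaining $c(G)\ge c(G_i)$ cops then play an optimal strategy for the ordinary game on $G_i$, treating the robber as visible.

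The main obstacle is the one case the guard does not immediately settle: the robber is invisible precisely when it lies on some vertex of $N(v)\cap V_i$ while also being adjacent to every main cop. I would argue that capture can still be forced without chance, because guarding the cut vertex keeps the robber cornered: its only invisible positions form the fixed set $N(v)\cap V_i$, and the cops retain enough information -- the robber's last visible position together with the positions of the guard and the main cops -- to deduce which of these vertices it occupies, or else some main cop is not adjacent to it and it is visible after all. Making this deduction airtight, namely showing that the optimal strategy on $G_i$ can be run so that the invisible positions are always resolvable, is where the real work lies and is the step I expect to require the most care.
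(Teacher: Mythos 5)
Your part (2) is correct and is essentially the paper's argument: an extra cop shadows $C_1$ on an adjacent, distinct vertex, and triangle-freeness means no vertex is adjacent to both, so the robber stays visible and the $c(G)$ cops win with full information. The bookkeeping you supply for maintaining the adjacency invariant is fine.

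Part (1), however, has a genuine gap, and it is not merely a matter of ``making the deduction airtight'' --- the deduction you hope for can be impossible. Take $G$ to be two copies of $K_4$ sharing a single vertex $v$. Here $c(G)=1$, so your strategy uses one guard on $v$ and one main cop. The robber stays on the triangle $N(v)\cap V_i=\{a,b,c\}$ in its copy; every such vertex is adjacent to $v$, so the robber is invisible whenever it is also adjacent to the main cop, which happens whenever the main cop is at $v$ or inside that copy. The main cop can force visibility only by retreating to the other copy, but any approach back through $v$ leaves three indistinguishable invisible positions for the robber, and since no chance is allowed the capture cannot be forced. So guarding the cut vertex itself does not work. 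The paper's fix is to place the extra cop not on the cut vertex $u$ but on a neighbour $w$ of $u$ lying in a \emph{different} component of $G-u$ from the robber: then the robber, confined to its own component, is never adjacent to that cop except when standing on $u$ (where its position is determined and it is caught), so it is visible for the entire game and the $c(G)$ cops run their usual winning strategy on all of $G$ (no retraction or component-wise cop-number argument is needed). Your retraction observation $c(G_i)\le c(G)$ is correct but does not rescue the strategy, because the obstruction is informational, not a shortage of cops.
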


\begin{proof} For (1), by Theorem~\ref{ttree}, we need only to consider $G$ with $c_H(G) \ge 2$. Let $u$ be a cut vertex, and let $v$ and $w$ be vertices adjacent to $u$ that are in
different components of $G-u.$

Place one cop on $v$ and the remaining cops on $w.$ If the robber is invisible, then they must be on $u$, in which case the cops can capture the robber in the next round.

Therefore, the robber will choose to start off of $u$, and so is visible in the initial round. Suppose that $R$ is in the same component as $v$ (the case when $R$ is in the component of $w$ is
analogous). One cop remains on $w$ throughout the game and ensures that the robber remains visible unless the robber moves to $u$ (in which case they are captured). Hence, the robber remains visible
for the remainder of the game, and the remaining $c(G)$-many cops can play their winning strategy in $G$ and capture the robber.

For (2), let $c(G)=m$ and consider $m$-many cops $C_1, C_2, \ldots, C_m$ playing a winning strategy $\mathcal{S}$ in $G$ in the usual game of Cops and Robbers. Add one cop $C'$ who remains adjacent
(but not equal) to a fixed cop $C_1$ at all times. The robber cannot be adjacent to both $C_1$ and $C'$ as there are no triangles, and so remains visible throughout the game. Hence, $m+1$ cops win
playing Hyperopic Cops and Robbers on $G$.
\end{proof}

We next consider how large $c_H$ can be for a connected graph.

\begin{theorem}\label{tmax}
For a graph $G$ on $n$ vertices, $c_H(G)\leq \lceil \frac{n}{2} \rceil$. The bound is tight as witnessed by cliques.
\end{theorem}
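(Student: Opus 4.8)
The statement has two parts: the universal upper bound $c_H(G)\le\lceil n/2\rceil$ and the tightness claim $c_H(K_n)=\lceil n/2\rceil$. My plan for the upper bound is to exhibit a placement of $\lceil n/2\rceil$ cops that forces a capture already on the cops' first move, regardless of whether the robber starts visible or invisible. The placement I would use is a dominating set padded to have size exactly $k:=\lceil n/2\rceil$. Since $G$ is connected with $n\ge 2$ it has no isolated vertex, so it is a classical fact (Ore) that it has a dominating set of size at most $n/2\le\lceil n/2\rceil$; enlarging such a set by arbitrary vertices yields a dominating set $S$ with $|S|=k$ (the case $n=1$ is trivial). I would station one cop on each vertex of $S$.

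For the capturing move I would split into the two visibility cases, which the cops can distinguish. If the robber starts visible at a vertex $r$, then $r\notin S$, and because $S$ is dominating some cop is adjacent to $r$; moving that cop onto $r$ captures, since capture is tested immediately after the cops move. If the robber starts invisible, then by definition it is adjacent to every cop, so it lies in the common neighborhood $C:=\{v\in V\setminus S: v\sim u \text{ for all } u\in S\}$. Two facts make this case work: first, every cop is adjacent to every vertex of $C$, so any cop may move to any vertex of $C$; and second, $C$ is disjoint from $S$, whence $|C|\le n-|S|=\lfloor n/2\rfloor\le k$. Thus the $k$ cops can be assigned injectively onto the vertices of $C$, covering all possible robber locations simultaneously, so the capture is guaranteed in the sense required by the no-chance rule.

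For tightness, the bound $c_H(K_n)\le\lceil n/2\rceil$ is the special case just proved, so it remains to show $c_H(K_n)\ge\lceil n/2\rceil$, i.e. that $k\le\lceil n/2\rceil-1$ cops lose. Here I would track the set $P$ of vertices consistent with the cops' observations. In $K_n$ the robber is invisible whenever it is not on a cop's vertex, so the cops never gain positional information beyond ``the robber survived''; after each cop move to a set $S'$ the robber may relocate to any vertex off $S'$, so $P$ is always $V\setminus S'$, of size $n-k$. Since $k\le\lceil n/2\rceil-1$ gives $n-k\ge\lfloor n/2\rfloor+1>k$, the possible set strictly exceeds the number of cops in every round, so the cops can never occupy all of $P$ in a single move. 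By the no-chance rule they can never effect a capture, and the robber evades forever.

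The main obstacle is the invisible case of the upper bound, and specifically the tension between the two roles the cop set must play: it must dominate $G$ (to pounce on a visible robber) yet be large enough that its common neighborhood is small (to cover an invisible robber). Taking a dominating set of size exactly $\lceil n/2\rceil$ rather than a minimum one is what reconciles these, since the inequality $|C|\le n-|S|$ is only useful when $|S|$ is large; the star $K_{1,n-1}$ shows that a small dominating set can have an enormous common neighborhood. A secondary point to handle carefully is that both tightness directions hinge on the no-chance rule: the robber's survival in $K_n$ comes not from dodging but from the cops being forbidden to capture whenever the possible set is not fully covered.
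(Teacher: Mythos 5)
Your proposal is correct and follows essentially the same route as the paper: place $\lceil n/2\rceil$ cops on a (padded) dominating set, capture immediately if the robber is visible, and otherwise cover the at most $\lfloor n/2\rfloor$ candidate vertices adjacent to all cops; the clique lower bound likewise matches the paper's counting argument via the no-chance rule. Your write-up is somewhat more explicit than the paper's (e.g.\ in justifying $|C|\le n-|S|$ and in tracking the possible-position set on $K_n$), but the underlying ideas are identical.
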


\begin{proof}
Let $m=\lceil \frac{n}{2} \rceil$. It is straightforward to see that $\gamma(G) \le \frac{n}{2}$ for a connected graph $G$ of order $n$, where $\gamma(G)$ is the domination number of $G$. The
$m$-many cops start the game by occupying a dominating set $S$ of cardinality $m$. If the robber is visible, then the cops win. If the robber is invisible, then they must be adjacent to all of the
cops. In this case, there are at most $\lfloor \frac{n}{2} \rfloor$ vertices that may be occupied by the robber, and each of these vertices must be adjacent to all the cops' positions. The cops then
move to all these positions to capture the robber, which is possible since $\lfloor \frac{n}{2} \rfloor \leq m$.

In a clique $K_n$, if fewer than $m$-vertices are occupied by cops, then they cannot infer the robber's position. Hence, the robber wins. \end{proof}

Note that cliques are not the only graphs witnessing the $\lceil \frac{n}{2} \rceil$ upper bound.

\begin{theorem} \label{tke}
For all $n\ge 3,$ we have that $c_H(K_n-e)=\lfloor \frac{n}{2} \rfloor,$ where $K_n-e$ is $K_n$ with one edge removed.
\end{theorem}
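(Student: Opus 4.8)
The plan is to prove matching upper and lower bounds, exploiting the structure of $K_n-e$: writing $e=xy$, the endpoints $x$ and $y$ form the only non-adjacent pair, while the remaining $n-2$ vertices are \emph{universal} (adjacent to everything). This single missing edge is exactly what lets one save a cop compared with $K_n$, since a cop on $x$ renders $y$ a visible spot, and vice versa.

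For the upper bound $c_H(K_n-e)\le\lfloor n/2\rfloor$, I would place one cop on $x$ and the remaining $\lfloor n/2\rfloor-1$ cops on universal vertices. A cop on $x$ forces any invisible robber to avoid $y$, since $y$ is not adjacent to $x$; as every universal vertex is adjacent to all cops, the only invisible positions are the unoccupied universal vertices. One checks that the number of these is at most the number of cops (it is $\lfloor n/2\rfloor-1$ when $n$ is even and $\lfloor n/2\rfloor$ when $n$ is odd), so the cops can move to occupy all of them simultaneously and capture. If instead the robber is visible, it must sit on $y$ (the only visible choice off the cops), and any universal cop, one of which exists once $n\ge 4$, is adjacent to $y$ and captures on the next move. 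The key timing point, which I would state explicitly, is that capture is tested after the cops move but before the robber moves, so covering all candidate positions guarantees a capture.

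For the lower bound I would show $\lfloor n/2\rfloor-1$ cops lose by tracking the cops' possibility set $P$ of vertices the invisible robber could occupy. Let $I$ denote the set of unoccupied vertices adjacent to every cop, that is, the invisible positions for the current cop configuration. A short case analysis on whether cops occupy $x$ and/or $y$ shows that $|I|\ge\lfloor n/2\rfloor$ in every configuration reachable with $\lfloor n/2\rfloor-1$ cops; in particular $|I|$ strictly exceeds the number of cops, so the cops can never occupy all of $I$ in one move, and hence can never guarantee a capture. To make this a genuine strategy I would maintain the invariant $P=I$: since $I$ always contains a universal vertex (valid whenever $n\ge 4$), we have $N[I]=V(G)$, so after any cop move the updated possibility set is exactly the new invisible set $I'$, and the robber, currently on some vertex of $I$, can always step to a universal vertex of $I'$ (reachable from anywhere) to stay invisible and preserve the invariant.

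I expect the main obstacle to be the lower bound's bookkeeping: verifying $|I|\ge\lfloor n/2\rfloor$ uniformly across the cases where $0$, $1$, or $2$ cops sit on $\{x,y\}$, and confirming that the possibility set genuinely remains equal to $I$ rather than shrinking because the cops gain information, round after round. The case $n=3$ is degenerate and handled separately: $K_3-e$ is the path $P_3$, hence a tree, so $c_H=1=\lfloor 3/2\rfloor$ by Theorem~\ref{ttree}.
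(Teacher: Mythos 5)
Your proposal is correct and takes essentially the same approach as the paper: the upper bound (one cop on an endpoint of the missing edge, the rest on universal vertices, then covering the $\lceil n/2\rceil-1$ remaining invisible positions) is identical, and your lower bound's possibility-set invariant is just a more explicit packaging of the paper's counting argument that $\lfloor n/2\rfloor-1$ cops can occupy plus reach fewer vertices than the robber's candidate set. The bookkeeping you flag (that the possibility set does not collapse between the cops' move and the robber's reply) does check out, since whenever no unoccupied universal vertex survives both cop configurations, both $x$ and $y$ do, and $N[\{x,y\}]=V(G)$.
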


\begin{proof}
As the case for $n=3$ is straightforward, we consider $n \ge 4.$ Suppose $e=uv$ and let $X=V(K_n-e) \setminus \{u,v\}$. For the lower bound, assume there are at most $\lfloor \frac{n}{2}
\rfloor-1$-many cops. The robber's strategy is to remain in or move to $X$ when either $u$ or $v$ is occupied by a cop and this is possible as $|X|\ge 2$. Otherwise, the robber does not restrict
themselves to any subgraph.

If there is a cop on $u$ or $v$, then after the robber's move, the robber is in $X$ and therefore, is invisible and the cops know they are not on either $u$ or $v$. In this case, the cops can only
occupy at most $\lfloor \frac{n}{2} \rfloor-2$ of the vertices of $X$. By moving, the cops can only cover another $\lfloor \frac{n}{2} \rfloor-1$ of the vertices of $X$. As $\lfloor \frac{n}{2}
\rfloor-2+\lfloor \frac{n}{2} \rfloor-1=2\lfloor \frac{n}{2} \rfloor-3< n-2$, the robber may evade capture.

If there is no cop on $u$ or $v$, then the robber is invisible and may be on any vertex of $K_n-e$. The cops could then occupy and move to at most $2\lfloor \frac{n}{2} \rfloor-2<n$ vertices. Therefore, the robber
evades capture in this situation, and the lower bound follows.

To prove $c_H(K_n-e)\leq \lfloor \frac{n}{2} \rfloor$, we consider the following cop strategy. Note that we have at least two cops for any of the graphs we are considering. The cops occupy $u$ and
$\lfloor \frac{n}{2} \rfloor-1$ other distinct vertices. The cops can see if the robber occupies $v$ and so the robber never moves there. Thus, the cops move to the remaining $(n-2)-(\lfloor n/2
\rfloor -1) < \lfloor n/2 \rfloor$ vertices of $X$ and capture the robber.
\end{proof}

It may be that the graphs $K_4 - 2e$, $K_n$, and $K_m - e$, for $m$ even, are the only connected graphs $G$ satisfying $c_H(G)=\lceil \frac{|V(G)|}{2} \rceil$. The following theorem provides some
evidence for this.

\begin{theorem}\label{thm:kn-2e}
For $n\geq 5$, $c_H(K_n-2e)\leq \lceil \frac{n}{2} \rceil-1$, where $K_n-2e$ is $K_n$ with any two edges removed.
\end{theorem}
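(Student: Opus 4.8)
The plan is to follow the cop strategies behind Theorem~\ref{tmax} and Theorem~\ref{tke}: we deploy $k=\lceil \frac n2\rceil-1$ cops and exploit each deleted edge to make certain vertices visible, thereby shrinking the set of positions an invisible robber could occupy until it is coverable in a single move. Since $n\ge 5$ we have $k\ge 2$. Write $V=V(K_n-2e)$, and for an initial cop placement $S$ let $R(S)$ be the set of non-cop vertices adjacent to every cop; this is precisely the set of vertices on which the robber can sit while remaining invisible. On the opening round the robber is either visible, and then lies off $R(S)$ and must be captured directly, or it lies in $R(S)$. Hence it suffices to choose $S$ with $|R(S)|\le k$, to verify that every cop can step onto every vertex of $R(S)$ (so that the cops cover $R(S)$ and capture with no chance involved), and to dispose of the visible case.

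I would split according to whether the two deleted edges share an endpoint. \textbf{Case 1: the edges are $uv$ and $uw$.} Then $V\setminus\{u\}$ induces a clique and the only non-neighbours of $u$ are $v$ and $w$. Place a cop on $u$ and the remaining $k-1$ cops on distinct vertices of $V\setminus\{u,v,w\}$, which is possible as $n-3\ge k-1$. The cop on $u$ renders both $v$ and $w$ visible, while every other non-cop vertex is adjacent to all cops; thus $R(S)$ consists of all non-cop vertices apart from $v$ and $w$, and $|R(S)|=(n-k)-2$. \textbf{Case 2: the edges are disjoint, say $uv$ and $xy$.} Place cops on $u$ and $x$ and the remaining $k-2$ cops on distinct vertices of $V\setminus\{u,v,x,y\}$, which is possible as $n-4\ge k-2$. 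Now the cop on $u$ makes $v$ visible and the cop on $x$ makes $y$ visible, every other non-cop vertex is adjacent to all cops, and again $|R(S)|=(n-k)-2$.

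In either case the decisive inequality is $(n-k)-2\le k$, that is $n\le 2\lceil \frac n2\rceil$, which holds for every $n$ (with equality exactly when $n$ is even). Because the vertices of $R(S)$ are precisely the non-cop vertices avoiding the ``blocked'' endpoints, each of them is adjacent to every cop, so any cop can move onto any vertex of $R(S)$; as $|R(S)|\le k$, I can assign distinct cops to all of $R(S)$ and capture. For the visible case, the only visible non-cop vertices are the blocked ones ($v,w$ in Case 1; $v,y$ in Case 2), and each is adjacent to a cop other than the one producing its visibility: a clique cop reaches $v$ or $w$ in Case 1, while the cop on $x$ reaches $v$ and the cop on $u$ reaches $y$ in Case 2. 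That cop captures.

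The main point to get right is the bookkeeping for $R(S)$: with only two missing edges one must confirm that no vertex is both blocked and forced into $R(S)$, and that the counts $n-3\ge k-1$ and $n-4\ge k-2$ really leave room for the auxiliary cops. The boundary instance $n=5$ (where $k=2$ and there are no auxiliary cops) is worth checking directly against this scheme, as is the ``no chance'' clause, which is automatic here since the deployed cops cover the whole of $R(S)$ at once.
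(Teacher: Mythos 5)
Your proposal is correct and follows essentially the same route as the paper: the same case split on whether the two deleted edges share an endpoint, the same cop placement (one cop on the shared endpoint, or one on each of $u$ and $x$, with the remaining cops on vertices off the deleted edges), and the same count showing the invisible robber is confined to $\lfloor \frac{n}{2}\rfloor - 1$ vertices, all adjacent to every cop. Your explicit handling of the visible case and the $n=5$ boundary is slightly more careful than the paper's, but the argument is the same.
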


\begin{proof}
Note that we have at least two cops for any of the graphs we are considering. If the two edges removed are incident to a common vertex $u$ and the edges removed are $uv$ and $uw$, then the cops place
one cop on $u$ and the rest on distinct vertices not including $u$, $v$, and $w$. The cop on $u$ can see the robber if they occupy $v$ or $w$ and there is at least one cop adjacent to $v$ and $w$.

Therefore, the robber is restricted to playing on $n-(\lceil \frac{n}{2} \rceil-1)-2= \lfloor \frac{n}{2} \rfloor -1$ vertices. Each of the $\lceil \frac{n}{2} \rceil-1$ cops can move to a distinct
vertex that the robber could occupy. Therefore the cops can guarantee a capture.

If the two edges removed are not incident to a common vertex and the edges removed are $ab$ and $cd$, then place a cop on $a$ and $c$. The remaining cops are placed on distinct vertices not including
$a$, $b$, $c$ or $d$. The cop on $a$ can the see the robber if they occupy $b$ and similarly, the cop on $c$ can see the robber if they occupy $d$. The cop on $a$ is adjacent to $d$ and the cop on
$c$ is adjacent to $b$. Therefore, the robber is restricted to playing on $n-1-\lceil \frac{n}{2} \rceil=  \lfloor \frac{n}{2} \rfloor -1$ vertices.  All of the cops can move to distinct vertices
that the robber could occupy and the proof of the upper bound now follows from the previous case.
\end{proof}

\section{Diameter}

\subsection{Diameter at least 3}

In large diameter graphs where the robber may be quite far from the cops, it seems intuitive that the robber is visible more often. This intuition guides the bounds given in this section.

\begin{theorem}\label{td}
If $\mathrm{diam}(G)\ge 3$, then $c_H(G)\le c(G) +2$.
\end{theorem}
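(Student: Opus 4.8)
The plan is to mimic the strategy used in Theorem~\ref{ttf}(2): run an ordinary winning strategy with $c(G)$ cops, and add a small number of \emph{stationary} cops whose only job is to keep the robber visible at all times. Whereas the triangle-free argument needed just one extra cop (using the absence of a common neighbour of two adjacent cops), here I would use two extra cops, placed far apart so that the robber can never be adjacent to both of them simultaneously, and hence can never become invisible.

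First I would exploit the diameter hypothesis to fix two vertices $u$ and $w$ with $d(u,w)=3$. The key structural observation is that such a pair has disjoint closed neighbourhoods, that is, $N[u]\cap N[w]=\emptyset$. Indeed, a common neighbour of $u$ and $w$ would produce a path $u\,x\,w$ of length $2$, forcing $d(u,w)\le 2$, and $u,w$ themselves are non-adjacent for the same reason; hence neither $N(u)\cap N(w)$, nor any of the ``diagonal'' intersections among $\{u\},\{w\},N(u),N(w)$, can be nonempty.

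Next I would place two cops permanently on $u$ and $w$ and never move them, while the remaining $c(G)$ cops choose the initial positions prescribed by a winning strategy $\mathcal{S}$ for the usual game of Cops and Robbers. The heart of the argument is the invariant that the robber is visible in \emph{every} round, including the initial one. For the robber to be invisible it must be adjacent to all cops, in particular to both stationary cops, so it would have to occupy a vertex of $N[u]\cap N[w]=\emptyset$, which is impossible; and if the robber ever moves onto $u$ or $w$ it shares a vertex with a cop and is captured. Thus the robber is visible throughout the game. Since the two fixed cops may legitimately pass (loops are allowed), and since visibility of the robber is exactly the information condition assumed in the usual game, the $c(G)$ mobile cops may simply execute $\mathcal{S}$ against the (always visible) robber and capture it. This uses $c(G)+2$ cops in total, giving the bound.

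I expect the main obstacle to be purely one of careful bookkeeping rather than a genuine difficulty: one must confirm that fixing two cops while letting the others play their standard strategy is a legal play in Hyperopic Cops and Robbers, that the visibility invariant holds already at the initial placement (so $\mathcal{S}$ sees a visible target from the very start), and that the ``no chance'' capture requirement is met because $\mathcal{S}$ captures a fully visible robber deterministically.
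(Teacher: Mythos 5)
Your proposal is correct and follows essentially the same approach as the paper: place two cops on vertices at distance at least three, whose disjoint closed neighbourhoods force the robber to remain visible, and let $c(G)$ additional cops execute a standard winning strategy. The paper's proof is just a terser version of your argument.
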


\begin{proof} Consider two vertices $u$ and $v$ which are distance at least three apart in $G$. By placing a cop at
both $u$ and $v$, the cops can see every vertex since the robber can never move to a common neighbor of $u$ and $v.$  Therefore, $c(G)$ additional cops can capture the robber on $G$. \end{proof}

Hence, one question is to characterize $G$ with diameter 3 such that $c_H(G) = c(G) + i,$ where $i=0,1,2.$ At present, we do not know of an example of a graph with diameter at least 3 such that
$c_H(G) = c(G) +2$.

We use the notation $\delta(G)$ for the minimum degree of $G$.

\begin{theorem}\label{ttdelta}
If $\mathrm{diam}(G)\geq 3$ and $\delta(G)\leq c(G)$, then $c_H(G)\leq c(G)+1$.
\end{theorem}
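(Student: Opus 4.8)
The plan is to improve the bound in Theorem~\ref{td} from $c(G)+2$ to $c(G)+1$ by replacing one of the two ``visibility'' cops with the structure coming from a low-degree vertex. Let $w$ be a vertex of minimum degree, so that $|N(w)|=\delta(G)\le c(G)$, and station one cop $C'$ permanently at $w$. Since $\mathrm{diam}(G)\ge 3$, no vertex is universal, so $N(w)\subsetneq V(G)$; more importantly, with $C'$ at $w$ the robber is invisible only when it occupies a vertex of $N(w)$, a set of at most $c(G)$ vertices that the cops can always identify. The remaining $c(G)$ cops, call them $D_1,\dots,D_{c(G)}$, will be used to finish, exactly as in Theorem~\ref{td} the second far-away cop is used, except that here the \emph{smallness} of $N(w)$ plays the role of that second cop.

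First I would record the visibility dichotomy precisely: after any round, if the robber sits outside $N(w)$ it is not adjacent to $C'$ and is therefore visible, whereas if it is invisible then it lies in $N(w)$ and, moreover, is adjacent to every $D_i$. Thus the set of possible invisible positions is $P=N(w)\cap\bigcap_i N(D_i)\subseteq N(w)$. Next, while the robber is visible I would let $D_1,\dots,D_{c(G)}$ run a fixed winning strategy $\mathcal S$ for the ordinary game of Cops and Robbers on $G$, tracking the robber's true location. The key capture mechanism is that $C'$ is adjacent to every vertex of $N(w)$: if the cops can ever force $|P|\le 1$, then on the next move $C'$ steps onto the unique candidate and captures the robber. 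Because $|N(w)|\le c(G)$, the cops $D_1,\dots,D_{c(G)}$ have exactly enough manpower to ``cover'' $N(w)$, by occupying, or being adjacent to, all but one of its vertices, so the plan is to use them to drive $|P|$ down to at most one whenever the robber tries to hide.

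The main obstacle is reconciling these two modes: the robber will not cooperate by staying in $N(w)$ while the $D_i$ close in, and can instead oscillate, dipping into $N(w)$ to reset the cops' information and stepping back out (becoming visible) just before $P$ collapses. The crux of the proof is therefore a monotone progress argument showing this oscillation cannot continue forever. I would measure progress by the potential associated with the strategy $\mathcal S$ (the quantity it forces to decrease against a visible robber) and argue that a sojourn in $N(w)$ cannot increase it: every vertex of $N(w)$ is pinned next to the stationary cop $C'$, and the hypothesis $\mathrm{diam}(G)\ge 3$ should guarantee the chasing cops enough room to approach and cover $N(w)$ while the robber is confined there. Hence each return to visibility happens with the cops no worse off than before, $\mathcal S$ resumes, and after finitely many rounds either $\mathcal S$ captures the now-visible robber directly or the cops force $|P|\le 1$ and $C'$ makes the capture. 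I expect formalizing this non-oscillation and progress step, rather than the placement of $C'$, to be where essentially all of the work lies.
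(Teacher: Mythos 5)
Your setup matches the paper's: a cop $C'$ parked on a minimum-degree vertex $w$, with the other $c(G)$ cops running a classical winning strategy against the visible robber. But the step you identify as ``the crux'' --- a monotone progress argument ruling out oscillation in and out of $N(w)$ --- is both left unproved and, more importantly, unnecessary, because you have missed the observation that actually closes the argument. By the rules of the game, the robber is invisible precisely when it is adjacent to \emph{every} cop. So the moment the robber goes invisible, the candidate set $P=N(w)\cap\bigcap_i N(D_i)$ satisfies two things at once: $|P|\le |N(w)|=\delta(G)\le c(G)$, and \emph{every} vertex of $P$ is adjacent to \emph{every one} of the $c(G)+1$ cops (the $D_i$ by the definition of $P$, and $C'$ because $P\subseteq N(w)$). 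Hence the cops can, in a single move, occupy all of $P$ and capture. There is no need to drive $|P|$ down to $1$, no role for a potential function, and no oscillation to worry about: the first round in which the robber is invisible is the last round of the game. Your plan instead reserves the capture for $C'$ alone (hence the demand $|P|\le 1$) and proposes to have the $D_i$ ``cover'' $N(w)$ while somehow not losing ground in $\mathcal S$ --- a mechanism that is genuinely problematic, since $\mathcal S$ is only defined against a robber of known position, and you give no actual argument that the dip-and-reset behaviour terminates.

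Two smaller points. The paper also specifies the initial placement ($C'$ on $w$ and the $c(G)$ cops covering $N(w)$) so that the robber must begin visible outside $N[w]$; in your write-up the initial round is not addressed, though the same ``invisible implies adjacent to all cops'' observation would handle it. And note that the only place $\mathrm{diam}(G)\ge 3$ is really needed is to make the overall theorem nontrivial relative to Theorem~\ref{tdelta}; it plays no structural role of the kind you assign to it (``enough room to approach and cover $N(w)$'').
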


\begin{proof}
Place cop $C'$ on a vertex $v$ of minimum degree and $c(G)$-many cops on $N(v)$ in such a way that each vertex in $N(v)$ contains at least one cop. The robber will be visible on their first move as
they will not occupy a vertex of $N[v]$.

The cop $C'$ remains on $v$ and the remaining $c(G)$ cops follow a winning strategy for Cops and Robbers on $G$. Note that this implies that if the robber never enters $N(v)$, then they are captured
eventually.

If the robber enters $N(v)$ and is visible, then $C'$ captures the robber. If the robber enters $N(v)$ and is not visible, then this implies that all $c(G)+1$-many cops are adjacent to the robber.
Any vertex in $N(v)$ that is not adjacent to any cop is visible. Hence, there are at most $\delta(G)$ vertices that the robber could occupy which are adjacent to all the cops; let $Y$ denote this set
of at most $\delta(G)$ vertices. The cops then move to $Y$ to capture the robber. \end{proof}

\begin{corollary}
If $\mathrm{diam}(G)\geq 3$ and $c_H(G) = c(G) +2,$ then $G$ contains a triangle or four-cycle.
\end{corollary}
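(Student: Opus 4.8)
The plan is to argue by contraposition. The statement that $G$ contains a triangle or a four-cycle is exactly the assertion $\mathrm{girth}(G)\le 4$; its negation is that $G$ is simultaneously triangle-free and $C_4$-free. So I would assume $\mathrm{diam}(G)\ge 3$ together with the absence of any triangle and any four-cycle, and establish $c_H(G)\le c(G)+1$. Since Theorem~\ref{td} already gives $c_H(G)\le c(G)+2$ for every graph of diameter at least $3$, the bound $c_H(G)\le c(G)+1$ rules out the equality $c_H(G)=c(G)+2$, which is precisely the contrapositive.

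The key observation is that forbidding triangles already makes $G$ triangle-free, so the stronger hypothesis (also forbidding four-cycles) is not needed: Theorem~\ref{ttf}(2) applies directly and yields $c_H(G)\le c(G)+1$. Concretely, the winning plan there places $c(G)$ cops running an ordinary Cops-and-Robbers strategy and adds one cop $C'$ shadowing a fixed cop $C_1$; because $G$ has no triangle, no vertex is adjacent to both $C_1$ and $C'$, so the robber is never in both neighbourhoods at once and is therefore visible on every round, reducing the hyperopic game to the classical one.

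The subtle point---more an issue of bookkeeping than a genuine obstacle---is identifying which earlier result to invoke. One is tempted to lean on the immediately preceding Theorem~\ref{ttdelta}, but its conclusion is a degree condition: contraposing it only gives $\delta(G)\ge c(G)+1$ when $c_H(G)=c(G)+2$, and high minimum degree together with $\mathrm{diam}(G)\ge 3$ does not force a short cycle (high-girth regular graphs exist). The cycle conclusion instead comes from the earlier Theorem~\ref{ttf}(2): if $c_H(G)=c(G)+2$, then $G$ cannot be triangle-free, hence contains a triangle, and so in particular contains a triangle or four-cycle. The four-cycle alternative in the statement merely packages the conclusion as the natural dichotomy $\mathrm{girth}(G)\le 4$.
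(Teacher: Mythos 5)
Your proof is correct, but it takes a genuinely different route from the paper's. The paper combines Theorem~\ref{ttdelta} with the Aigner--Fromme bound from \cite{af}: if $G$ has girth at least $5$ (no triangle and no four-cycle), then $c(G)\ge\delta(G)$, so the hypothesis $\delta(G)\le c(G)$ of Theorem~\ref{ttdelta} is met and $c_H(G)\le c(G)+1$ follows. You instead invoke Theorem~\ref{ttf}(2) directly: triangle-freeness alone gives $c_H(G)\le c(G)+1$. Your observation that Theorem~\ref{ttdelta} by itself only yields a minimum-degree conclusion is accurate --- the missing ingredient in that route is precisely the girth-$5$ cop-number bound, which is how the paper supplies the hypothesis $\delta(G)\le c(G)$. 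Your argument is shorter and in fact proves something strictly stronger: $c_H(G)=c(G)+2$ forces a triangle (not merely a triangle or four-cycle), and the diameter hypothesis is never used. The paper's argument genuinely needs both forbidden subgraphs and the diameter condition, and in exchange it showcases Theorem~\ref{ttdelta}, which is presumably why the authors chose it; but as a proof of the stated corollary, your derivation from Theorem~\ref{ttf}(2) is complete and valid.
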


\begin{proof} By a well-known result of \cite{af}, if $G$ has girth at least 5, then $c(G) \ge \delta(G)$. The result now follows from Theorem~\ref{ttdelta}. \end{proof}

\begin{theorem}
If $G$ has diameter $3$, then for some sufficiently large $N$, whenever $n=|V(G)| \ge N$ we have that $c_H(G) < \lceil n/2 \rceil$. If $G$ is also bipartite, then $N=23.$
\end{theorem}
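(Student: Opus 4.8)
My plan is to reduce the statement to bounds on the ordinary cop number and, in the bipartite case, to the domination number, and then invoke extremal results for those parameters. Since $\mathrm{diam}(G)\ge 3$, Theorem~\ref{td} gives $c_H(G)\le c(G)+2$, so for the general statement it suffices to force $c(G)\le\lceil n/2\rceil-3$. This follows from any of the known sub-linear upper bounds on the cop number (Meyniel-type bounds of the form $c(G)=o(n)$): once $n$ is large enough that $c(G)<n/2-2$, we get $c_H(G)\le c(G)+2<n/2\le\lceil n/2\rceil$. This proves the existence of $N$, but only non-explicitly, which is precisely why the general statement is phrased existentially.

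For the bipartite case I would instead route through the domination number, which can be controlled by hand and yields an explicit constant. A bipartite graph is triangle-free, so Theorem~\ref{ttf}(2) gives $c_H(G)\le c(G)+1$, and the standard bound $c(G)\le\gamma(G)$ (place the cops on a minimum dominating set; in the visible game the cop adjacent to the robber captures on the first move) yields $c_H(G)\le\gamma(G)+1$. Hence it is enough to show that a connected bipartite graph of diameter $3$ on $n\ge 23$ vertices satisfies $\gamma(G)\le\lceil n/2\rceil-2$, for then $c_H(G)\le\lceil n/2\rceil-1<\lceil n/2\rceil$. This is also where the asymmetry with the general case comes from: a diameter-$3$ graph can have $\gamma(G)=n/2$ (for example the corona $K_m\circ K_1$, a clique with a pendant at each vertex), so the cheap bound $c\le\gamma$ is hopeless in general, whereas those extremal graphs all contain triangles and so are excluded once $G$ is bipartite.

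The core of the bipartite argument is therefore an extremal-domination statement. I would begin from the classical characterization (Payan--Xuong; Fink--Jacobson--Kinch--Roberts) that a connected graph has $\gamma=n/2$ exactly when it is $C_4$ or a corona $H\circ K_1$; since $C_4$ has diameter $2$ and $\mathrm{diam}(H\circ K_1)=\mathrm{diam}(H)+2$, the only bipartite diameter-$3$ graph with $\gamma=n/2$ is $P_4=K_2\circ K_1$. To improve $\gamma<n/2$ to $\gamma\le\lceil n/2\rceil-2$ I would then invoke the corresponding descriptions of the next tiers of near-extremal graphs (those with $\gamma$ equal to $\lceil n/2\rceil-1$), and verify that every bipartite member of diameter exactly $3$ has bounded order, the threshold $n=23$ emerging from this finite case check together with a $22$-vertex example showing it is sharp for this method. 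I expect this near-extremal bookkeeping to be the main obstacle: each corona-based family realizing $\gamma$ close to $n/2$ must be tested both for being bipartite and for having diameter \emph{exactly} $3$, and one must argue that the diameter-$3$ constraint makes the structure dense enough that only finitely many such graphs survive.
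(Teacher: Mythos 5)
Your general-case argument coincides with the paper's: combine Theorem~\ref{td} with the sublinear bound $c(G)=o(n)$ of Lu and Peng to obtain a non-explicit $N$. That half is correct.

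The bipartite half is where you diverge, and it contains a genuine gap. The paper obtains the explicit threshold from a result of \cite{lu} that is tailored to exactly this setting: a bipartite graph of diameter $3$ (like a graph of diameter $2$) satisfies $c(G)\le 2\sqrt{n}$. With Theorem~\ref{ttf}(2) this gives $c_H(G)\le 2\sqrt{n}+1\le\lceil n/2\rceil-1$ once $n\ge 23$, and that arithmetic is the entire proof. Your route instead reduces the statement, via the sound chain $c_H(G)\le c(G)+1\le\gamma(G)+1$, to the claim that every connected bipartite diameter-$3$ graph on $n\ge 23$ vertices has $\gamma(G)\le\lceil n/2\rceil-2$; but that domination bound is precisely what you do not prove. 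The Payan--Xuong/Fink et al.\ characterization you invoke only excludes $\gamma=n/2$, hence only yields $\gamma\le\lceil n/2\rceil-1$ (and for odd $n$ this already follows from Ore's bound by integrality); to reach $\lceil n/2\rceil-2$ you would need the considerably messier characterization of connected graphs with $\gamma=\lfloor n/2\rfloor$ and a case check of which members are bipartite of diameter exactly $3$. You explicitly defer this ``bookkeeping,'' exhibit neither the claimed $22$-vertex sharpness example nor any computation producing the threshold $23$, so the bipartite statement is asserted rather than proved. (If you wanted to complete this route: McCuaig--Shepherd's bound $\gamma\le 2n/5$ for connected graphs with minimum degree at least $2$ handles that case for $n\ge 20$, and a pendant vertex in a bipartite diameter-$3$ graph forces one side of the bipartition to equal the neighbourhood of its support vertex, which constrains the remaining case; but the Lu--Peng route is far shorter and is what actually delivers $N=23$.)
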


\begin{proof} Note that by Theorem~\ref{td}, $c_H(G) \le c(G) + 2$. Further, we know from results of Lu and Peng~\cite{lu} that $c(G) = o(n)$, for all connected $G$. Hence, we can choose $N$
 sufficiently large so that $c(G) + 2 < \lceil n/2 \rceil$  and the proof follows.

If $G$ is diameter 3 and bipartite, then, by \cite{lu}, for all $G$, $c(G) \le 2 \sqrt n.$  By Theorem~\ref{ttf}, $c_H (G) \le c(G)+1.$  As $2 \sqrt n + 1 \le \lceil n/2 \rceil -1$, for all $n \ge
23$, the proof follows. \end{proof}

We finish the section with examples of diameter 3 graphs where $c = c_H.$ A \emph{projective plane}\index{projective plane} consists of a set of points and lines satisfying the following axioms:
\begin{enumerate}
\item There is exactly one line incident with every pair of distinct points;

\item There is exactly one point incident with every pair of distinct lines;

\item There are four points such that no line is incident with more than two of them.
\end{enumerate}
Finite projective planes possess $q^{2}+q+1$ points for some integer $q>0$ (called the \emph{order} of the plane). Projective planes of order $q$ exist for all prime powers $q$, and an unsettled
conjecture claims that $q$ must be a prime power for such planes to exist. Given a projective plane $S$, define its \emph{incidence graph} $G(S)$ to be the bipartite graph whose vertices consist of
the points (one partite set), and lines (the second partite set), with a point adjacent to a line if two are incident in $S.$ Note that $G(P)$ is girth 6 and diameter 3; it is known \cite{bonato}
that $c(G(P))=q+1$.

\begin{theorem}
If $G$ is the incidence graph of a projective plane of order $q$, where $q$ is a prime power, then $c_H(G)=c(G)=q+1$.
\end{theorem}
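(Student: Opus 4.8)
The plan is to prove the two inequalities $c_H(G)\ge q+1$ and $c_H(G)\le q+1$ separately, the first being immediate and the second resting on a single structural observation about the incidence graph. For the lower bound, recall from the introduction that $c(G)\le c_H(G)$ for every graph, and that $c(G)=q+1$ by the result of \cite{bonato} quoted above; hence $c_H(G)\ge q+1$ with no further work. All of the content lies in showing $c_H(G)\le q+1$, and the strategy I would use is to run a known winning strategy of $q+1$ cops for the ordinary game of Cops and Robbers on $G$, after arguing that on $G$ the hyperopic restriction costs the cops no information at all.

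The key step is the following visibility lemma: if $q+1$ cops are placed on $G$ so that they do not all occupy a single common vertex, then whenever the robber is invisible its position is uniquely determined by the positions of the cops. To see why this should hold, note that the robber is invisible exactly when it is adjacent to every cop, so the robber is a common neighbour of the cop set; the robber's position is ambiguous only if the cops have two distinct common neighbours $r\ne r'$. In that case every cop lies in $N(r)\cap N(r')$. Since $G$ is bipartite with parts the points and the lines, if $r$ and $r'$ lie on opposite sides then $N(r)\cap N(r')=\emptyset$, while if they lie on the same side then the two projective-plane axioms (two points determine a unique line, two lines determine a unique point) force $N(r)\cap N(r')$ to be a single vertex. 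Either way all cops would have to occupy one common vertex, contrary to hypothesis. Thus an invisible robber is always pinned to the unique common neighbour of the cops, which they can compute.

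With the lemma in hand, the argument is that the cops always know the robber's exact location: when it is visible they see it directly, and when it is invisible they deduce it from the lemma. Consequently the hyperopic game on $G$ with $q+1$ cops presents the cops with exactly the same information as the ordinary game, the \emph{no chance} requirement is automatically met because the target vertex is never ambiguous, and any winning strategy for $q+1$ cops in the ordinary game of Cops and Robbers transfers verbatim. Since $c(G)=q+1$ such a strategy exists, and one may harmlessly insist that the cops never all collapse onto a single vertex (the strategy of \cite{bonato} already keeps them spread), so the hypothesis of the lemma holds throughout the play. This yields $c_H(G)\le q+1$ and, combined with the lower bound, the equality $c_H(G)=c(G)=q+1$.

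I expect the main obstacle to be isolating and proving the visibility lemma cleanly, in particular recognising that the only way an invisible robber could be ambiguously located is for the cops to pile onto a single vertex, which is exactly the degenerate configuration one excludes. Once this is established the reduction to the ordinary game is routine, since the genuinely game-theoretic work has already been carried out in computing $c(G)=q+1$.
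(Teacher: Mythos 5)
Your lower bound and your visibility lemma are both correct: the incidence graph is bipartite and any two distinct vertices in it have at most one common neighbour (two points lie on a unique line, two lines meet in a unique point, and a point and a line have no common neighbour by bipartiteness), so an invisible robber is the unique common neighbour of the cops as soon as the cops occupy at least two distinct vertices. This is a genuinely different route from the paper's proof, which does not reduce to the ordinary game but instead builds an explicit hyperopic strategy: the $q+1$ cops start on $q+1$ distinct point-vertices, and a case analysis on which side of the bipartition the robber occupies, together with the projective-plane axioms, lets the cops move onto the $q+1$ lines through the robber's neighbours and finish. Your reduction is arguably cleaner in that it isolates exactly why hyperopia costs nothing on this graph.

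The one step you cannot wave through, however, is the assertion that the winning strategy for the ordinary game ``already keeps them spread.'' If at some round all $q+1$ cops occupied a single vertex $v$, an invisible robber could be at any of the $q+1$ neighbours of $v$ and your lemma gives nothing; this is precisely the ambiguity that makes $c_H>c$ possible in general, and the paper itself found it necessary to prove full lemmas (Lemmas~\ref{outerplanarcops} and~\ref{planarcops}) to establish analogous non-collision properties of cop strategies for outerplanar and planar graphs, so such a property is not automatic for a strategy you have merely cited and not exhibited. The gap is fillable: either verify the property for the explicit strategy (the standard one places the cops on the $q+1$ points of a fixed line, and they remain on distinct vertices throughout), or give a short shadowing argument showing that with $q+1\ge 3$ cops one may always keep at least two of them apart without harming the pursuit. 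As written, though, this step is an unproved assumption, and it is the only place where your argument falls short of a complete proof.
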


\begin{proof}
The lower bound is due to the fact that $c_H(G)\geq c(G)=q+1$. For the upper bound, the $q+1$ cops begin by occupying $q+1$ distinct vertices in the vertices representing the points of the incidence
graph. If the robber begins by occupying a vertex in the vertices representing the lines of the incidence graph, then if they are not visible, there is only one unique vertex that they can occupy
(since each vertex is degree $q+1$). In that case, the cops win next round since they are all adjacent to them and know their position. If the robber occupies a vertex in the lines of the incidence
graph and is visible, then they must be non-adjacent to each cop or lose in the next round. The cops can move to $q+1$ distinct lines that are adjacent to the $q+1$ neighbors $S$ of the robber, since
any two points in the incidence graph are incident to exactly one line. The robber can only move to one of those points or pass. The cops know if the robber moves to a point, since they become
invisible if they move. The cops then move to $S$ and capture the robber if they moved, or capture them in the next round if the robber did not move.

If the robber begins by occupying a vertex $p$ in the points of the incidence graph, then they are visible and one cop $C_1$ can be moved to force the robber to occupy a line of the incidence graph.
Once the robber occupies a line of the incidence graph, they are visible since the one cop that was moved also occupies a line. The cop $C_1$ moves to $p$ and the remaining $q$ cops move to the $q$
lines that are adjacent to the remaining $q$ neighbors of the robber. The robber will be visible in the next round no matter their move, since there is at least one cop on each of the vertices
representing points and on the vertices representing lines. Further, in the next round, the robber will be adjacent to at least one cop and, thus, they are captured next round.
\end{proof}

\subsection{Diameter 2}

We know by Theorem~\ref{tke} that the hyperopic cop number of a diameter 2 graph can be unbounded as a function of either the cop number or the order of the graph. We present some results for the
diameter 2 case in the present section.

Define $G \vee J$ to be the \emph{join} of $G$ and $J$; note that $G\vee J$ is always diameter at most 2.

\begin{theorem}\label{join}
For a graph $G$ and a graph $J$ with an isolated vertex, we have that $c_H(G \vee J)\leq c_H(G)+1$.
\end{theorem}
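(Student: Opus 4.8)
The plan is to use $c_H(G)$ cops to run a winning hyperopic strategy inside the copy of $G$, and to park the single extra cop $C'$ permanently on an isolated vertex $w$ of $J$. First I would record the relevant adjacencies in $G\vee J$: every vertex of $G$ is joined to every vertex of $J$, while $w$, being isolated in $J$, is adjacent in $G\vee J$ to all of $V(G)$ but to no other vertex of $J$.

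Next I would establish the central dichotomy on where the robber may safely sit. Since invisibility requires the robber to be adjacent to \emph{every} cop and $C'$ occupies $w$, any invisible robber must be adjacent to $w$ and hence must lie in $V(G)$. Conversely, suppose the robber is on a vertex $x\in V(J)\setminus\{w\}$ at the start of a cops' move. Then $x$ is not adjacent to $w$, so the robber is visible; moreover $x$ is adjacent to every vertex of $G$, so any of the $c_H(G)$ cops (all of which I keep inside $G$) is adjacent to $x$ and may step onto $x$, capturing with certainty. Thus the robber is captured the instant it occupies a vertex of $V(J)\setminus\{w\}$, and it cannot occupy $w$, which holds $C'$. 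This confines the robber to $V(G)$.

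The remaining step is to observe that for a robber confined to $V(G)$ the game is exactly the hyperopic game on $G$ played by the $c_H(G)$ cops. The key point is that a robber in $V(G)$ is always adjacent to $C'$, so $C'$ imposes no visibility constraint: a robber in $V(G)$ is invisible in $G\vee J$ if and only if it is adjacent to all $c_H(G)$ cops, which is precisely the invisibility condition on $G$. Since $G$ is an induced subgraph of $G\vee J$, all cop and robber moves within $G$ are legitimate and adjacency is preserved, so the designated cops may faithfully run their winning $G$-strategy and eventually capture.

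The only delicate point, and the real content of the argument, is the boundary behaviour between $G$ and $J$: one must check that the robber gains nothing by darting into $J$ and back. This is settled by the capture observation above, since any move into $V(J)\setminus\{w\}$ is answered by an immediate capture on the following cops' turn, so such moves are never advantageous and the robber is trapped in $G$ for the entire game. I would also note at the outset that $c_H(G)\ge 1$ guarantees at least one cop inside $G$ to perform these captures, and I would flag that this is exactly where the hypothesis that $J$ has an isolated vertex is used: without it, a $J$-neighbour of $w$ would be adjacent to $C'$ and could provide the robber an invisible refuge inside $J$.
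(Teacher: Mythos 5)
Your proposal is correct and follows exactly the paper's strategy: station one cop on the isolated vertex of $J$ and let the remaining $c_H(G)$ cops play the hyperopic winning strategy in $G$, noting that the robber is captured immediately upon entering $J$ and is otherwise confined to $G$. Your write-up simply makes explicit the details the paper leaves implicit, in particular that the cop on the isolated vertex is adjacent to all of $V(G)$ and therefore does not alter the visibility condition for a robber playing in $G$.
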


\begin{proof}
One cop occupies an isolated vertex in $J$ and the other $c_H(G)$ cops occupy vertices in $G$. If the robber is ever in $J$, then they become visible, and are captured immediately since the
$c_H(G)$-many cops in $G$ are adjacent to the robber. If the robber remains in $G$, then the $c_H(G)$ cops in $G$ will capture the robber.
\end{proof}

We use the notation $\Delta(G)$ for the maximum degree of $G.$

\begin{theorem}\label{tdelta}
If $G$ is diameter 2, then $c_H(G)\le \min\{\delta (G) +1, \Delta(G)\}$.
\end{theorem}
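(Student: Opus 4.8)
The plan is to prove the two inequalities $c_H(G)\le \Delta(G)$ and $c_H(G)\le \delta(G)+1$ separately; since $c_H(G)$ is then at most both quantities, it is at most their minimum. Both bounds arise from placing the cops on the (open or closed) neighborhood of a single well-chosen vertex and exploiting that, in a diameter~2 graph, every open neighborhood $N(w)$ is a dominating set: each vertex is in $N(w)$, is equal to $w$ (hence adjacent to all of $N(w)$), or lies at distance exactly two from $w$ and is therefore adjacent to some vertex of $N(w)$.

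For $c_H(G)\le \Delta(G)$, I would let $v$ be a vertex of maximum degree and occupy $N(v)$ with $\Delta(G)$ cops, one per vertex. First I would analyze the set $A$ of vertices adjacent to every cop, i.e.\ the common neighborhood of $N(v)$. It contains $v$, and every $x\in A$ in fact satisfies $N(x)=N(v)$, since $x$ is adjacent to all $\Delta(G)$ vertices of $N(v)$ and has degree at most $\Delta(G)$; consequently $A$ is an independent set each of whose vertices is joined to all of $N(v)$, and fixing any $u\in N(v)$ gives $|A|\le \deg(u)\le \Delta(G)$. Hence if the robber is ever invisible it must sit in $A$, and the cops, who occupy $N(v)$ and are completely joined to $A$, can move to occupy all of $A$ in a single round, capturing the (yet-to-move) robber. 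The remaining, and harder, case is a robber that stays visible; here the intended idea is that, the robber's position $r$ being known, the cops press in on $N(r)$ (which has at most $\Delta(G)$ vertices) so as to shrink the robber's safe region until it is pinned to a uniquely determined vertex and captured.

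For $c_H(G)\le \delta(G)+1$, I would take $v$ of minimum degree and place $\delta(G)+1$ cops on the closed neighborhood $N[v]$. The key observation is that the robber can never be invisible while the cops hold $N[v]$: to be invisible it would have to be adjacent to the cop on $v$ and hence lie in $N(v)$, but every vertex of $N(v)$ already carries a cop, so the robber would share a vertex with a cop and thus be visible, a contradiction. Keeping the cop on $v$ fixed then confines the robber to the set $L_2$ of vertices at distance two from $v$ (any move into $N(v)$ is a move onto a cop), where it remains permanently visible, and the $\delta(G)$ cops on $N(v)$, which dominate $L_2$, are used to run it down.

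The main obstacle in both parts is the endgame against a \emph{visible} robber. Visibility by itself does not finish the game: as the pentagon $C_5$ shows, a robber can remain visible yet indefinitely exploit the threat of slipping back into invisibility to escape a clumsy pursuit, and a naive ``surround $N(r)$ in one move'' attempt can fail because two of the vertices the cops must occupy may be reachable only from a single cop. I therefore expect the substantive work to be a careful, possibly multi-round, pursuit that maintains an invariant forcing the robber's reachable safe vertices to strictly decrease, so that it is eventually trapped on a uniquely determined vertex and captured with certainty, as the no-chance rule demands.
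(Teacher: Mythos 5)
Your cop placements are sound --- for the $\delta(G)+1$ bound you use exactly the paper's placement ($N[v]$ for $v$ of minimum degree), and for the $\Delta(G)$ bound your placement (all of $N(v)$ for $v$ of maximum degree) differs from the paper's ($u$ together with all but one vertex of $N(u)$) but your analysis of the invisible case via the common neighborhood $A$ of $N(v)$, with $A\subseteq N(u)$ for any $u\in N(v)$ giving $|A|\le\Delta(G)$, is correct and complete. The gap is that you explicitly leave the visible-robber case unresolved, calling it ``the remaining, and harder, case'' and deferring it to an unspecified ``careful, possibly multi-round, pursuit.'' No such pursuit is needed, and this is precisely the point your proposal is missing: in both of your placements the cops occupy a dominating set, so a visible robber sits on a vertex adjacent to some cop, and that cop simply moves onto the robber's vertex at the start of the next round. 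Since the cops move first within a round and capture means a cop occupying the robber's current vertex, the robber does not get to move away --- this is exactly how the paper's own proofs of Theorems~\ref{tmax} and~\ref{tdelta} close (``the robber is caught in the next round''). Your $C_5$ worry about a visible robber ``exploiting the threat of slipping back into invisibility'' therefore never arises: there is no chase, only a one-move capture.

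Once that observation is supplied, your argument is complete, and in fact both of your bounds become one-round strategies: if the robber is visible it is dominated and captured immediately, and if it is invisible it lies in $A$ (for the $\Delta$ bound) or cannot exist at all (for the $\delta+1$ bound, since every vertex of $N(v)$ carries a cop and the robber would have to stand on one, making it visible), and in the former case the $\Delta(G)$ cops, each adjacent to every vertex of $A$, spread over $A$ and capture. This is arguably cleaner than the paper's $\Delta(G)$ argument, which leaves one neighbor $v$ of $u$ uncovered and then needs a short case analysis over three rounds (robber starts on $v$; robber starts dominated; robber starts adjacent only to $v$, forcing the cop on $u$ to slide to $v$ and then using the degree bound on the robber's next vertex to keep it visible). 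So: right skeleton, genuinely different and viable placement for the $\Delta$ half, but the write-up as it stands does not prove the theorem because the endgame you defer is never carried out --- even though it is a one-line consequence of domination plus the capture convention.
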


\begin{proof} We prove first that $c_H(G)\le \delta (G) +1$. The cops start on a vertex of minimum degree, say $u$, and all its neighbors.  As the $\mathrm{diam}(G) =2$,
this is a dominating set and the cop on $u$ is at distance two from any possible robber position, the cops can see the robber and therefore, the robber is caught in the next round.

For the upper bound of $\Delta(G),$ the cops start on any vertex of maximum degree, say  $u$, and occupy every vertex in $N(u)$ except for one, say $v$.  The robber begins at a vertex $x$. If $x\ne
v$, then the cop at $u$ is distance two from the robber and so the cops can see the robber. If $x=v$, then the robber is invisible but the cops know they are on $v$, and can catch the robber on the
next round. We conclude that $x$ is adjacent to $v$ but no other vertex in $N(v)$. The cop on $u$ moves to $v$.  If the robber remains on $x$, the cops can catch the robber on the next round.
Therefore, the robber moves to some vertex $y\ne x$.  As $y$ is adjacent to $x$ and at most $\Delta(G)-1$ other vertices, there is a cop which is not adjacent to the robber and therefore, the robber
can be seen by the cops. Further, the cops are positioned on $N(u)$ which is a dominating set as $\mathrm{diam}(G)=2$.  Therefore, the robber can be captured on the next move. \end{proof}

\begin{corollary}
If $\mathrm{diam}(G)=2$ and $G$ has girth 5 and is $r$-regular, then $c_H(G)= r$.
\end{corollary}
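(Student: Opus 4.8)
The plan is to prove the equality by establishing the two matching inequalities $c_H(G)\le r$ and $c_H(G)\ge r$ separately, each of which follows almost immediately from results already available in the excerpt. No structural analysis is really required: although the hypotheses (diameter $2$, girth $5$, $r$-regular) force $G$ to be a Moore graph, the result drops out of two inequalities without invoking that fact.

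For the upper bound I would apply Theorem~\ref{tdelta} directly. Since $G$ is $r$-regular we have $\delta(G)=\Delta(G)=r$, and since $\mathrm{diam}(G)=2$ the theorem gives
$c_H(G)\le \min\{\delta(G)+1,\Delta(G)\}=\min\{r+1,r\}=r$. It is worth noting that this half uses only the diameter-$2$ and regularity hypotheses; the girth condition plays no role in the upper bound.

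For the lower bound I would combine the elementary inequality $c(G)\le c_H(G)$ (observed in the introduction) with the Aigner--Fromme bound from \cite{af}, which was already invoked in the proof of the preceding corollary: every graph of girth at least $5$ satisfies $c(G)\ge\delta(G)$. As $G$ has girth $5$ and is $r$-regular, this yields $c(G)\ge\delta(G)=r$, and hence $c_H(G)\ge c(G)\ge r$. Putting the two bounds together gives $c_H(G)=r$ (and, as a byproduct, $c(G)=r$ as well).

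The main ``obstacle'' here is not difficulty but bookkeeping: one must simply check that the hypotheses line up correctly with the cited statements---that $r$-regularity collapses $\min\{\delta+1,\Delta\}$ to $r$ on the one side, and that girth exactly $5$ qualifies for the girth-$\ge 5$ hypothesis of \cite{af} on the other. Once these are matched, the corollary is immediate, so I would present it as a short two-line argument citing Theorem~\ref{tdelta} and \cite{af}.
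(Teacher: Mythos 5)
Your proposal is correct and follows exactly the paper's argument: the upper bound $c_H(G)\le\min\{\delta(G)+1,\Delta(G)\}=r$ comes from Theorem~\ref{tdelta}, and the lower bound combines $c(G)\le c_H(G)$ with the Aigner--Fromme result that girth at least $5$ forces $c(G)\ge\delta(G)$. The paper states this in one line; you have simply spelled out the same two inequalities.
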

\begin{proof}
This follows from Theorem~\ref{tdelta} and by the result of \cite{af}, which states if $G$ is $r$-regular with girth at least 5, $c(G)\ge r$.
\end{proof}

As an application of the corollary, note that the Petersen graph has hyperopic cop number equaling 3.

\section{Planar graphs} \label{secplan}

In this section, we analyze the game of Hyperopic Cops and Robbers played on planar graphs. For planar graphs and outerplanar graphs, we find that the upper bound of the hyperopic cop number matches
the upper bound of the cop number. For a given strategy $\mathcal{S}$ of some set of cops, we say $\mathcal{S}$ is \emph{lonely} if no two of the cops ever occupy the same vertex at the end of a
round throughout the execution of $\mathcal{S}.$ By default, all strategies of a single cop are lonely.

\begin{lemma}\label{lonely}
Suppose that $G$ is a graph and $d$ is a positive integer satisfying the following properties:
\begin{enumerate}
\item The graph $G$ is $K_{d,d+1}$-free.
\item A set of $d
$ cops have a lonely winning strategy $\mathcal{S}$ in the game of Cops and Robbers played on $G$.
\end{enumerate}
Then we have that $c_H(G)\le d.$
\end{lemma}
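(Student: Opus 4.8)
The plan is to have the same $d$ cops execute the lonely winning strategy $\mathcal{S}$ verbatim for as long as the robber is visible, and to interrupt $\mathcal{S}$ with a single capturing move the instant the robber becomes invisible. The crux is that invisibility forces the robber into a small, completely determined set of candidate vertices that the $d$ cops are able to cover all at once.

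First I would record the structural consequence of the two hypotheses. Because $\mathcal{S}$ is lonely, at the end of every round the $d$ cops occupy a set $A$ of exactly $d$ \emph{distinct} vertices. Suppose at some moment the robber is invisible; by definition the robber is then adjacent to every cop, and since the robber has not been captured, it lies off $A$. Thus the robber occupies a vertex of the set $B := \{\, x \in V(G)\setminus A : x \text{ is adjacent to every vertex of } A \,\}$, the common non-cop neighborhood of $A$. I claim $|B|\le d$: otherwise, any $d+1$ vertices $B' \subseteq B$ together with $A$ would realize all of the edges between $A$ and $B'$, giving a $K_{d,d+1}$ subgraph and contradicting hypothesis (1). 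Since the cops always know their own positions, they can compute $B$ exactly, so whenever the robber is invisible the cops know an explicit set of at most $d$ vertices that must contain it.

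Next I would describe the capturing move and assemble the strategy. As every vertex of $B$ is adjacent to every cop, each cop can reach each vertex of $B$ in one step; since $|B|\le d$, the cops assign one cop to each vertex of $B$ (the remaining cops passing) and thereby occupy all of $B$ simultaneously. The robber sits on some vertex of $B$ before it moves, so this move places a cop on the robber and yields a capture that is guaranteed rather than fortunate, meeting the ``no chance'' requirement. Now the cops begin on the initial configuration prescribed by $\mathcal{S}$ and follow $\mathcal{S}$ while the robber is visible; throughout this phase they hold exactly the information available in the usual game, so $\mathcal{S}$ applies and, by loneliness, the cops remain on $d$ distinct vertices. At any round (including the initial placement round) in which the robber is invisible, the cops instead perform the covering move above and win. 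Consequently the robber can never go invisible without being captured, so to survive it must stay visible forever; but then $\mathcal{S}$, being a winning strategy for the usual game, captures it. In either case $d$ cops suffice, giving $c_H(G)\le d$.

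I expect the main obstacle to be the bookkeeping around information and timing rather than any deep argument: one must verify that the cops can always correctly certify the invisible state from their own positions, that the switch from $\mathcal{S}$ to the covering move is legitimate at every juncture (in particular during the robber's initial placement), and that while the robber remains visible the hyperopic game faithfully simulates the usual game so that $\mathcal{S}$ continues to be applicable. The role of the loneliness hypothesis is precisely to keep $|A|=d$, without which the bound $|B|\le d$ obtained from $K_{d,d+1}$-freeness could fail.
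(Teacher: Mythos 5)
Your proposal is correct and follows essentially the same route as the paper: run the lonely strategy $\mathcal{S}$, and the moment the robber goes invisible, use loneliness to get $d$ distinct cop positions and $K_{d,d+1}$-freeness to bound the set of common neighbors by $d$, then have the cops cover that set in one move. The extra bookkeeping you flag (certifying invisibility from the cops' own positions, the one-step reachability of the candidate set, and the faithfulness of the simulation while the robber is visible) is all sound and is left implicit in the paper's shorter write-up.
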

\begin{proof}  We execute $\mathcal{S}$ while playing Hyperopic Cops and Robbers. If the robber is visible throughout the game, then the cops win as in the classical game of Cops and Robbers.
Therefore, suppose the robber is invisible at some point in the game; in particular, the robber is adjacent to each cop. Let $Y$ be the possible locations of the robber and $X$ the positions of the
cops. Then the subgraph induced by $X\cup Y$ contains a complete bipartite graph $K_{|X|,|Y|}=K_{d,|Y|}$ as the cops occupy distinct vertices. If $|Y|\le d$, then the cops may deduce the robber's
location and move to $Y$ for capture. If $|Y|> d$, then we find a copy of $K_{d,d+1}$, which is a contradiction.
\end{proof}

\begin{lemma}\label{outerplanarcops}
For any outerplanar graph $G$ of order $n\geq2$, there exists a winning lonely strategy for two cops when playing Cops and Robbers.
\end{lemma}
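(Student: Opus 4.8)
The plan is to exhibit an explicit \emph{lonely} two-cop strategy for the classical game on $G$, built directly from the outerplanar structure rather than from an abstract dismantling or treewidth argument, since keeping the two cops on distinct vertices is precisely what Lemma~\ref{lonely} needs. First I would reduce to the $2$-connected case: a connected outerplanar graph has a block--cut tree whose blocks are bridges or $2$-connected outerplanar graphs, with trees and single edges as trivial base cases (one cop suffices, and a second parked on a distinct vertex keeps the strategy lonely). So I concentrate on a $2$-connected outerplanar $G$ with a fixed outerplanar embedding. Then the outer face is bounded by a Hamiltonian cycle $C$ and the chords are pairwise non-crossing, and the inner faces form a tree $T$ (the weak dual) in which adjacent faces share a chord. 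The structural fact I would use repeatedly is that the endpoints of every chord $xy$ form a $2$-cut, since in the embedding $xy$ splits the bounded region into two parts meeting only at $x$ and $y$.

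The heart of the argument is a \emph{moving wall} invariant: at the end of each round the two cops occupy the two distinct endpoints $u,v$ of an edge $e$ of $G$, and the robber is confined to the subgraph $H$ lying on one side of $e$, corresponding to a subtree of $T$. This holds initially by placing both cops on the endpoints of any outer edge $e_0$, since then all of $G$ lies on its inner side. For the inductive step, let $F$ be the inner face bordering $e$ on the robber's side, with boundary cycle $u=w_0,w_1,\dots,w_m=v$. As $F$ is a face it has no diagonal, so each $w_iw_{i+1}$ is an edge of $G$, and each such edge that is a chord is a $2$-cut splitting off a piece $H_i\subseteq H$. Because the robber is visible in the classical game, the cops read off which piece $H_j$, behind a chord $e'=w_jw_{j+1}$, currently contains the robber, and then advance the wall from $e$ to $e'$: one cop walks $w_0,w_1,\dots,w_j$ while the other walks $w_m,w_{m-1},\dots,w_{j+1}$. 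These two routes use disjoint vertex sets, so loneliness is preserved, and paced as a converging sweep of the cycle $\partial F$ they leave the robber only two ways to touch $\partial F$, namely the shrinking exits $w_j$ and $w_{j+1}$; the robber can leave $H_j$ only through this $2$-cut and $F$ has no interior chord to leap across. The new wall $e'$ then confines the robber to $H_j$, a strictly smaller subtree of $T$. Iterating, $H$ shrinks to a single leaf face, i.e.\ a cycle with the cops on one of its edges, where the standard two-cop cycle sweep (two cops converging along the two arcs, always on distinct vertices) effects a lonely capture.

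I expect the main obstacle to be the rigorous verification of the transition: proving that while the two cops walk along $\partial F$ the robber can neither cross to the far side of $F$ nor loop back around $\partial F$ to escape $H$ entirely. This is exactly where the non-crossing hypothesis does all the work, guaranteeing both that the relevant separators are genuine $2$-cuts and that no chord interior to $F$ enables a long-range jump, so that any excursion of the robber onto $\partial F$ is itself caught by the converging sweep (in the spirit of path-guarding as in \cite{af}). The remaining care is bookkeeping: confirming loneliness through every transition, which is immediate since the two walking routes are vertex-disjoint and the terminal sweep keeps the cops apart, and lifting the single-block argument to all of $G$ by advancing the same wall across the bridges and cut vertices of the block--cut tree.
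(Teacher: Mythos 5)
Your proposal is correct in outline but takes a genuinely different route from the paper's. The paper does not construct a strategy at all: it takes the known two-cop winning strategy for outerplanar graphs from \cite{bonato} as a black box, observes that in the $2$-connected (``no-cut-vertex'') phase the cops already occupy distinct vertices $a_i, a_j$ of degree at least $3$, and only patches the moments when cops must cross cut vertices between blocks, sending them through one at a time. You instead rebuild the strategy from scratch out of the outerplanar structure (Hamiltonian outer cycle, non-crossing chords, tree of inner faces), with loneliness designed in via the moving-wall invariant. The paper's route buys brevity at the cost of asking the reader to re-inspect the cited strategy; yours buys a self-contained argument in which loneliness is transparent, since your two walking routes along $\partial F$ are vertex-disjoint by construction. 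The one place your sketch must be tightened is the transition step, and the fact to make explicit is this: because the two exits $w_i, w_{i+1}$ of each piece $H_i$ span an edge of $G$, a \emph{single} cop standing on either of them already guards $H_i$ (a robber stepping onto the other exit is adjacent to that cop and is caught on the next move). This is what prevents the robber from slipping backwards across the vacated wall $e$, and it is also needed because a walk committed in advance to a fixed target $w_j, w_{j+1}$ can be defeated by a visible robber who changes pieces mid-walk: the cops should instead converge along the two arcs of $\partial F$, and the moment the robber commits to a piece $H_i$, the nearer cop parks on an endpoint of $w_i w_{i+1}$ and holds it alone while the other comes around to the second endpoint. With that guarding observation stated, the robber's territory (a subtree of the weak dual) strictly shrinks at each transition, and your argument, including the routine block-tree lifting, goes through.
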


\begin{proof} Throughout, we refer to the winning strategy $\mathcal{S}$ described in \cite{bonato} for two cops playing Cops and Robbers on $G$. We describe how to transform $\mathcal{S}$ into a
winning lonely strategy. Without loss of generality, we may assume the initial positions of the cops are distinct.

In 2-connected graphs, we refer to $\mathcal{S}$ as the \emph{no-cut-vertex strategy}. If there are cut vertices, then we decompose the graph $G$ into maximal 2-connected components or \emph{blocks}.
The strategy $\mathcal{S}$ utilizes retractions to capture the robber's \emph{shadow} in a given block; that is, the image under the appropriate retraction. For completeness, we define these
retractions (see \cite{bonato} for further details). Suppose that the blocks are $G_i$, for $1\le i \le n.$ We retract $G$ onto $G_i$, for any $i$, by the mapping described as follows. Let $x \in V
(G_i) $ and suppose that $x$ is a cut vertex of $G$. All vertices of $G$ that are disconnected from $G_i$ by the deletion of $x$ are mapped to $x$. Vertices of $G_i$ are mapped to themselves.

In the execution of $\mathcal{S},$ we are always in one of the following two situations:

\begin{enumerate}
  \item The graph is 2-connected and the cops occupy the vertices $a_i$ and $a_j$ with degrees at least 3, where $i \neq j$.
  \item The graph is not 2-connected and the cops capture the robber's shadow in one of the blocks $G_i$ using the no-cut-vertex strategy in $G_i$. Note that each $G_i$ has at least two vertices
      and only one cop is required to capture the robber's shadow.
\end{enumerate}

Case (1) never requires the two cops to occupy the same vertex as the cops remain on distinct vertices. Case (2) consists of multiple executions of the strategy in Case (1) and therefore, does not
require the two cops to occupy the same vertex. When the cops must move through a cut vertex from one block to another, they do so one at a time to avoid occupying the same vertex. \end{proof}

Observe that by Theorem~\ref{ttree}, trees are hyperopic cop-win. As the next result shows, all other outerplanar graphs have hyperopic cop number 2.

\begin{theorem}\label{hyperopicouterplanar}
For an outerplanar graph $G$, we have that $c_H(G)\leq 2$.
\end{theorem}
\begin{proof} We employ the winning lonely strategy $\mathcal{S}$ with two cops as described in the proof of Lemma~\ref{outerplanarcops}. The result now follows by Lemma~\ref{lonely} since outerplanar graphs
are $K_{2,3}$-free.
\end{proof}

We proceed in an analogous fashion for planar graphs.

\begin{lemma}\label{planarcops}
For any planar graph $G$ of order $n\geq3$, there exists a winning lonely strategy for three cops when playing Cops and Robbers.
\end{lemma}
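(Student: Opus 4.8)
The plan is to take the classical Aigner--Fromme strategy for three cops on a planar graph (as described in \cite{bonato,af}) and show that it can be perturbed into a \emph{lonely} one, mirroring the treatment of the outerplanar case in Lemma~\ref{outerplanarcops}. First I would recall the structure of that strategy. A single cop can \emph{guard} an isometric (shortest) path $P$: after a bounded number of moves the cop occupies the projection of the robber onto $P$, and thereafter the robber is captured the instant it steps onto $P$. The three-cop strategy maintains the invariant that the robber is confined to a region $R$ of the planar graph whose boundary consists of at most two guarded isometric paths sharing their two endpoints; two cops guard this boundary while the third repositions to guard a new isometric path that bisects $R$, after which the robber is trapped in a strictly smaller subregion and one boundary cop is freed. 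Since $R$ strictly shrinks, the robber is caught in finitely many rounds.

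Next I would isolate the only places where two cops are forced onto a common vertex. The initial placement is harmless: since $n\geq 3$, the three cops can be placed on distinct vertices. For the \emph{handoffs}, when a boundary cop is freed and sent to establish the next bisecting path, I would sequence the departures (as in the block-to-block transitions of Lemma~\ref{outerplanarcops}) so that the freed cop leaves its vertex before the arriving cop settles onto a nearby guarding position; since only one cop moves onto a given position at a time, these transitions create no collision. This leaves the genuine obstacle: the two geodesics bounding $R$ share their endpoints, and the bisecting geodesic meets the boundary at its own two endpoints, so at such a shared \emph{corner} vertex $w$ the projections of the robber onto two of the guarded paths can coincide, and two guards would both want to sit on $w$.

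To resolve this, I would assign each shared corner to a single designated cop and verify that one cop at $w$ simultaneously guards both incident paths there: to step onto either path at $w$, the robber---which lies inside $R$---must move from a neighbor of $w$ onto $w$, and the cop at $w$ captures it regardless of which path $w$ belongs to. For the same reason, if two guarded geodesics overlap along a shared segment rather than merely at a corner, a single cop guarding that segment suffices for both. With corners and shared segments each covered by one cop, the remaining guarding duties lie on internally disjoint portions of the paths, so the guards always occupy distinct vertices. I would then confirm that this reassignment preserves the guarding invariant, so the territory-shrinking argument goes through unchanged.

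The hard part will be verifying rigorously that a single cop at a shared corner really guards both incident paths there, and that during the bisection step the freed cop can be routed to its next position without ever coinciding with a still-guarding cop; both require a careful but routine case analysis of robber projections onto geodesics meeting at a common vertex, using the planarity of the region boundary. Granting this, the modified strategy is winning and lonely, which establishes the lemma.
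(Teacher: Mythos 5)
Your proposal follows the same overall plan as the paper---take the Aigner--Fromme three-cop strategy (as presented in \cite{bonato}) and perturb it into a lonely one---and your treatment of the transit/handoff collisions (sequencing departures, letting only one cop settle on a contested vertex at a time) matches the paper's device of having the transiting cop and a guarding cop swap roles or wait a round. Where you genuinely diverge is on the path-intersection issue. The paper's proof asserts that the two guarded shortest paths bounding the robber territory are \emph{disjoint} (their union forming a cycle) and that the new path $P$ is internally disjoint from that cycle, so once all three cops are installed on their paths no two ever want the same vertex; the only collisions it must handle are those occurring while the third cop is in transit. You instead confront the case where the guarded geodesics share corner vertices or segments, and propose to resolve it by designating a single cop to cover each shared vertex, arguing that one cop at a corner $w$ captures the robber the moment it steps onto $w$ regardless of which path $w$ belongs to. That is a legitimate alternative route, and arguably more robust, since it does not depend on being able to choose the bounding geodesics pairwise disjoint. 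However, it carries a verification burden you explicitly defer: when both projections sit at $w$, the \emph{displaced} second cop is no longer at its projection, and you must check it can return to the correct projection on its own path as fast as that projection moves away from $w$ (projections onto a geodesic move at most one step per robber move, so this is plausible, but it is exactly the kind of invariant-preservation claim that needs to be written out). The paper's formulation avoids this entirely, at the cost of relying on the disjointness of the guarded paths in the version of the strategy it cites. Either route closes the lemma; yours is correct in outline but would need the deferred corner analysis completed to be a full proof.
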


\begin{proof} Throughout, we refer to the winning strategy $\mathcal{S}$ described in \cite{bonato} for three cops playing Cops and Robbers on $G$. We describe how to transform $\mathcal{S}$ into a
winning lonely strategy. Without loss of generality, we may assume the initial positions of the cops are distinct.

In the execution of $\mathcal{S},$ we are always in one of the following two situations:

\begin{enumerate}
  \item Two cops $C_1$ and $C_2$ are guarding disjoint shortest paths whose union forms a cycle $X$. The third cop $C_3$ is moving either towards or onto a shortest path $P$ so as to eventually
      guard it and thus, release one of $C_1$ or $C_2$ from roles of guarding a path.
  \item There are at least two \emph{free} cops $C_2$ and $C_3$; that is, there are two cops who are not guarding shortest paths.
\end{enumerate}

We first consider Case (1). It is evident that the cops $C_1$ and $C_2$ never occupy the same vertex. The path $P$ must be internally disjoint from $X$ (as the path terminates on $X$). Therefore,
once $C_3$ occupies a vertex of $P$, none of the cops will ever occupy the same vertex since the three cops can guard the paths as three internally disjoint paths.

Suppose that $C_3$ is moving towards $P$ and wants to occupy the same vertex, say $u$, as one of $C_1$ or $C_2$. If one of them wants to pass (say $C_1$), then $C_3$ and $C_1$ change roles with $C_3$
moving to $u$ and $C_1$ moving towards $P$. If $C_1$ and $C_3$ both want to move to a given, adjacent vertex $x$, then $C_1$ moves to $x$ and $C_3$ passes. In the following round, $C_3$ moves to $x$
unless $C_1$ passes, in which case we follow the procedure above ($C_3$ and $C_1$ change roles with $C_3$ moving to $x$ and $C_1$ moving towards $P$). In this way, $C_3$ eventually gets to $P$, while
$X$ remains guarded and no two cops occupy the same vertex.

Case (2) is analogous, with the free cops $C_2$ and $C_3$ moving accordingly with $C_1$ as needed (in a similar fashion as above). Note that at most one of $C_2$ and $C_3$ will be moving at
a time towards a shortest path to guard it.
\end{proof}

\begin{theorem}\label{hyperopicplanar}
For a planar graph $G$, we have that $c_H(G)\leq 3$.
\end{theorem}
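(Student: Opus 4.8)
The plan is to mirror the argument used for outerplanar graphs in Theorem~\ref{hyperopicouterplanar} and invoke Lemma~\ref{lonely} with $d=3$. To apply that lemma to an arbitrary planar graph $G$, I need to verify its two hypotheses: that $G$ admits a lonely winning strategy for three cops in the classical game of Cops and Robbers, and that $G$ is $K_{3,4}$-free.

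The first hypothesis is supplied directly by Lemma~\ref{planarcops}, provided $|V(G)|\ge 3$. So I would begin by dispatching the small cases. If $|V(G)|\le 2$, then $G$ is $K_1$ or $K_2$, both of which are trees, and Theorem~\ref{ttree} already gives $c_H(G)=1\le 3$. For $|V(G)|\ge 3$, Lemma~\ref{planarcops} produces the required lonely winning strategy $\mathcal{S}$ for three cops.

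For the second hypothesis, I would observe that $K_{3,4}$ contains $K_{3,3}$ as a subgraph, and $K_{3,3}$ is non-planar. Hence no planar graph can contain $K_{3,4}$ as a (not necessarily induced) subgraph, so every planar graph is $K_{3,4}$-free. With both hypotheses established, Lemma~\ref{lonely} applied with $d=3$ yields $c_H(G)\le 3$, completing the proof.

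I do not expect a genuine obstacle here: the substantive work is already absorbed into Lemma~\ref{planarcops}, namely converting the standard three-cop planar pursuit strategy of \cite{bonato} into one in which no two cops ever share a vertex, so the present theorem reduces to a short deduction. The only points meriting care are the trivial small-order cases and the precise reading of the ``$K_{d,d+1}$-free'' hypothesis as forbidding $K_{3,4}$ as a subgraph, which is exactly the form in which it is used inside the proof of Lemma~\ref{lonely}.
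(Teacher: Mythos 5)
Your proposal is correct and follows essentially the same route as the paper: invoke Lemma~\ref{planarcops} for the lonely three-cop strategy and Lemma~\ref{lonely} with $d=3$, using the non-planarity of $K_{3,3}$ (hence of $K_{3,4}$) to verify the $K_{3,4}$-freeness hypothesis. Your explicit handling of the orders $n\le 2$ and the justification that planar graphs are $K_{3,4}$-free are minor additions the paper leaves implicit.
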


\begin{proof} We employ the winning lonely strategy $\mathcal{S}$ with three cops as described in the proof of Lemma~\ref{planarcops}.
The result now follows by Lemma~\ref{lonely} since planar graphs
are $K_{3,4}$-free.
\end{proof}

\section{Hyperopic density}\label{secdense}

For infinite graphs, an established approach to studying various graph parameters is via graph densities. For a non-negative integer-valued graph parameter $f$ and a graph $G$ such that $f(G)\le
|V(G)|$, define $f(G)/|V(G)|$ to be the density of $f$. To analyze the cop number of infinite graphs, we consider the \emph{cop density} of a finite graph first introduced in \cite{BHW}. Define
\begin{equation*}
D_{c}(G)=\frac{c(G)}{|V(G)|}.
\end{equation*}%
Note that $D_{c}(G)$ is a rational number in $(0,1]$. We extend the definition of $D_{c}$ to infinite graphs by considering limits of chains of finite graphs. In this way, the cop density for
infinite graphs is a real number in $[0,1]$.

A \emph{chain} of induced subgraphs in $G$, denoted $\mathcal{C}=(G_{n}:n\in \mathbb{N })$, has the property that $G_n$ is an induced subgraph of $G_{n+1}$ for all $n.$ The \emph{limit} of the chain
$\mathcal{C}$ is defined as the subgraph $G'$ such that
\begin{equation*}
V(G')=\bigcup\limits_{n\in \mathbb{N}}V(G_{n}), \qquad
E(G')=\bigcup\limits_{n\in \mathbb{N}}E(G_{n}).
\end{equation*}
We write $G'=\lim_{n\rightarrow \infty }G_{n}.$ We say that the chain $\mathcal{C}$ is \emph{full} if $G' = G.$  Note that every countable graph $G$ is the limit of a full chain of finite induced
graphs, and there are infinitely many distinct full chains
with limit $G$.

Suppose that $G=\lim_{n\rightarrow \infty }G_{n}$, where $\mathcal{C}=(G_{n}:n\in \mathbb{N%
})$ is a fixed full chain of induced subgraphs of $G$. Define
\begin{equation*}
D(G,\mathcal{C})=\lim_{n\rightarrow \infty }D_{c}(G_{n}),
\end{equation*}%
if the limit exists (and then it is a real number in $[0,1]$). This is the
\emph{cop density} \emph{of }$G$\emph{\ relative to} $\mathcal{C}$; if $%
\mathcal{C}$ is clear from context, we refer to this as the \emph{cop density} \emph{of} $G$. We will only consider graphs and chains where this limit exists. We may define the \emph{hyperopic cop
density}, written $D_H(G,\mathcal{C})$, in an analogous fashion.

Under fairly weak assumptions, it was shown that the cop density of a countable graph can be any real number in $[0,1]$ depending on the chain used; see \cite{BHW}. If we insist, however, that all the
elements of the chain $\mathcal{C}$ are connected, then the situation for cop density changes radically. By Frankl's bound on the cop number of connected graphs $G$, $c(G)=o(|V(G)|)$ (see
\cite{frankl}), it follows that
\begin{equation*}
D(G,\mathcal{C})=0.
\end{equation*}

We show that the situation for densities of the hyperopic cop number is very different.

\begin{theorem}\label{mainden}  There exists a countable graph $G$ such that for all $r\in
\lbrack 0,1/2]$, there is a chain $\mathcal{C}$ with limit $G$ such that each element of $\mathcal{C}$ is connected and $D_H(G,\mathcal{C})=r$.
\end{theorem}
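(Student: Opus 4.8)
The plan is to exhibit one countable graph $G$ that contains arbitrarily large cliques together with a sparse, highly-diluting but $2$-connected ``padding,'' and then to realize each target density $r$ by choosing how fast the chain absorbs clique vertices versus padding vertices. The governing intuition is Theorem~\ref{tmax}: a clique $K_m$ has $c_H(K_m)=\lceil m/2\rceil$, so densities close to $1/2$ come from clique-like pieces, while long paths and cycles (being triangle-free and planar) contribute density tending to $0$. The difficulty, flagged by Theorem~\ref{ttf}(1), is that naively joining a clique to sparse structure by a bridge introduces a cut vertex and collapses $c_H$ to at most $c+1$; the construction must therefore keep the diluting part attached in a $2$-connected manner so that the clique's large hyperopic cop number survives.

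Concretely, I would first analyze a gadget $G_{m,t}$: a clique $A=\{a_1,\dots,a_m\}$ together with $t$ padding vertices forming one or more internally disjoint paths (\emph{handles}) joining $a_1$ to $a_2$. The key lemma to prove is
\[
\tfrac{m}{2}-O(1)\ \le\ c_H(G_{m,t})\ \le\ \tfrac{m}{2}+O(1),
\]
uniformly in $t$. For the upper bound I would station two cops permanently on $a_1$ and $a_2$, which severs every handle from the clique, and place $\lceil m/2\rceil$ further cops inside $A$; since a handle-interior vertex has degree $2$ and is non-adjacent to any interior clique vertex, a robber on the handles is always visible to an interior cop and is swept up, while an invisible robber must lie in $A$ and is then located and captured exactly as in the clique case of Theorem~\ref{tmax}. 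For the lower bound I would let the robber confine itself to $A$ and observe that cops off $A\cup\{a_1,a_2\}$ are non-adjacent to interior clique vertices and hence useless against a clique-bound robber, so the clique lower bound of Theorem~\ref{tmax} applies and forces roughly $m/2$ cops.

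With the lemma in hand I would take $G$ to be the infinite clique on $\{a_1,a_2,\dots\}$ together with infinitely many internally disjoint handles joining $a_1$ and $a_2$, a $2$-connected countable graph. Given $r\in[0,1/2]$, I would select increasing integer sequences $m_n\to\infty$ and $t_n\to\infty$ so that $(m_n/2)/(m_n+t_n)\to r$ (taking $t_n=o(m_n)$ for $r=1/2$, $m_n=o(t_n)$ for $r=0$, and a fixed asymptotic ratio otherwise), and let $G_n$ be the induced subgraph on $\{a_1,\dots,a_{m_n}\}$ together with enough whole handles to supply $t_n$ padding vertices. Each $G_n$ is connected (indeed $2$-connected) and nested with union $G$, so $\mathcal{C}=(G_n)$ is a full connected chain; by the gadget lemma $D_H(G_n)=(m_n/2+O(1))/(m_n+t_n)\to r$ since $m_n\to\infty$. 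Rational $r$ are met asymptotically and irrational $r$ via rational approximation of the ratio.

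I expect the main obstacle to be the gadget lemma, specifically making the two bounds match to within $O(1)$ \emph{uniformly} in the number and lengths of the handles: the upper-bound strategy must be argued to survive the robber's freedom to roam the handle-union and to choose an invisible starting vertex, and the lower bound must rule out any way the cops could exploit the padding to locate a clique-bound robber with fewer than roughly $m/2$ cops. Once the additive $O(1)$ error is established, the density computation is routine because $m_n\to\infty$ washes it out.
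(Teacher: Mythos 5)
There is a genuine gap: the lower bound in your gadget lemma is false, and $G_{m,t}$ in fact has hyperopic cop number $O(1)$, independent of $m$. The flaw is the claim that cops placed on handle-interior vertices are non-adjacent to the interior clique vertices ``and hence useless against a clique-bound robber.'' In this game, non-adjacency is precisely what makes a cop useful: the robber is invisible if and only if they are within distance one of \emph{every} cop, so a single cop parked on a handle vertex at distance at least two from the clique renders a robber on any clique vertex permanently visible; a visible robber on a clique vertex containing even one cop is then located exactly and caught on the next cop move. Concretely, one cop stationed mid-handle together with a bounded number of cops running the ordinary Cops and Robbers strategy (guard $a_1$ and $a_2$, then clear either the rest of the clique or the single handle to which the robber is confined) wins, so the densities along your chain all tend to $0$ and only $r=0$ is realized. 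Your own upper-bound argument already exploits this asymmetry of visibility in one direction (interior clique cops see handle robbers); the same asymmetry, run in the other direction, destroys the lower bound.

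The property the padding must have, which $2$-connectivity alone does not supply, is that every padding vertex be adjacent to every clique vertex, so that no cop placement can ever make a clique-bound robber visible. This forces the join: the paper takes $G=K_{\aleph_0}\vee\overline{K_{\aleph_0}}$ and proves (Lemma~\ref{vlem}) that $c_H(K_r\vee\overline{K_s})=\lfloor r/2\rfloor+1$ for all $r,s\ge 2$, since the clique vertices are universal and a robber hiding there is invisible no matter where the cops stand, whence roughly $r/2$ cops are needed to cover all candidate positions as in Theorem~\ref{tmax}. With that lemma in place your density bookkeeping would go through much as you describe (and your observation that one only needs $D_H(G_n)\to r$, rather than exact equality at every stage as the paper arranges via Diophantine equations, is a legitimate simplification); but the handle gadget itself cannot be repaired.
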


We first need Lemma~\ref{vlem} (and its proof), which will then be followed by the proof of the main Theorem.

\begin{lemma} \label{vlem}
For all $r,s \ge 2,$ $c_H(K_r \vee \overline{K_s}) = \lfloor \frac{r}{2}\rfloor +1 .$
\end{lemma}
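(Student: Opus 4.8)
The plan is to establish matching upper and lower bounds. Write $A$ for the $r$ vertices forming the clique $K_r$ and $B$ for the $s$ independent vertices of $\overline{K_s}$, and set $k=\lfloor r/2\rfloor$. The single structural fact driving everything is that each vertex of $A$ is adjacent to \emph{every} other vertex of $G$ (to the rest of $A$ because $A$ is a clique, and to all of $B$ by the join). Hence a robber standing on a vertex of $A$ is invisible precisely when no cop shares its vertex. A complementary observation is that a cop placed on a vertex of $B$ confines an invisible robber to $A$, since the only neighbours of a vertex of $B$ lie in $A$. Note that applying Theorem~\ref{join} with $J=\overline{K_s}$ only yields $c_H\le c_H(K_r)+1=\lceil r/2\rceil+1$, which is one too large when $r$ is odd, so I would argue the upper bound directly.

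For the upper bound $c_H\le k+1$, I would place one cop $C_0$ on a vertex $b_1\in B$ and the remaining $k$ cops on a set $D$ of $k$ distinct vertices of $A$. The first step is to check that the only invisible positions are the $\lceil r/2\rceil$ vertices of $A\setminus D$: the cop $C_0$ rules out all of $B$, and the clique structure rules out nothing else; meanwhile every vertex of $B\setminus\{b_1\}$ leaves the robber \emph{visible}, being non-adjacent to $C_0$. If the robber starts invisible it lies in $A\setminus D$, and since $C_0$ together with each cop of $D$ can reach every vertex of $A$ and $k+1\ge\lceil r/2\rceil$, the cops cover $A\setminus D$ in one move and capture. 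If instead the robber starts visible on some $b_j$, the cops know its vertex exactly, and a cop of $D$ (adjacent to $b_j$) steps onto it. The only thing needing care is the count $k+1\ge\lceil r/2\rceil$, which is tight exactly when $r$ is odd.

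For the lower bound $c_H\ge k+1$ I would show that $k$ cops lose by exhibiting an evading robber who always sits on a vertex of $A$, and thus stays invisible unless actually captured. The key device is the cops' \emph{information set} $Y$, the set of positions consistent with the robber having remained invisible throughout; I would prove the invariant that $Y$ always equals the full current invisible region and has size strictly greater than $k$, so the $k$ cops can never simultaneously occupy all of $Y$ and hence can never guarantee a capture. The refresh step uses that $Y$ always contains a vertex of $A$, whose closed neighbourhood is all of $V(G)$, so after the robber's move $Y$ becomes exactly the invisible region determined by the current cop placement $X$. The crucial estimate is that this region exceeds $k$ for every placement of at most $k$ cops: if $X\cap B=\emptyset$ the region is $(A\setminus X)\cup B$, of size at least $(r-k)+s$; if $X\cap B\neq\emptyset$ it is $A\setminus X$, of size at least $r-(k-1)$; both exceed $k$ since $2k\le r<r+s$ and $2k\le r<r+1$. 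I expect the main obstacle to be the information-set bookkeeping rather than these inequalities, namely arguing cleanly that an invariant of the form ``$|Y|>k$ forever'' converts into a genuine indefinite evasion for the robber (there is always a consistent uncaptured trajectory), while the size computations are routine.
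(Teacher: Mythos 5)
Your upper bound is correct and is essentially the paper's argument: place $\lfloor r/2\rfloor$ cops on clique vertices and one on a co-clique vertex, observe that every co-clique vertex other than the occupied one is visible (and fatal, being dominated by the clique cops), and then cover the $\lceil r/2\rceil$ remaining clique vertices with all $\lfloor r/2\rfloor+1$ cops in one move. Your count $\lfloor r/2\rfloor+1\ge\lceil r/2\rceil$ is exactly where the paper's odd-$r$ remark lives, and your remark that Theorem~\ref{join} is off by one for odd $r$ is a correct observation.

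The lower bound is where there is a genuine gap, and it sits precisely at the step you flagged. Your opening claim --- that the evading robber ``always sits on a vertex of $A$'' --- and the stated justification of the refresh step --- that $Y$ contains a vertex of $A$ whose closed neighbourhood is all of $V(G)$ --- both fail when $r$ is even. With $k=r/2$ cops the cops can occupy a set $X_t\subseteq A$ of size $k$ and then move to $X_{t+1}=A\setminus X_t$; after that move \emph{every} consistent position in $A$ has been stepped on, so $Y_t\cap I(X_{t+1})\cap A=A\setminus(X_t\cup X_{t+1})=\emptyset$. The vertex of $A$ that you want to use for the refresh does not survive the cops' move, and a robber genuinely committed to $A$ is caught. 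The invariant you want ($Y_t$ equals the full invisible region $I(X_t)$, hence $|Y_t|>k$) is nevertheless true, but in this sweeping case it must be rescued through $\overline{K_s}$: since both $X_t$ and $X_{t+1}$ lie entirely in $A$, all of $B$ is invisible with respect to both placements, so $Y_t\cap I(X_{t+1})\supseteq B\ne\emptyset$, and then $N[B]=V(G)$ regenerates $Y_{t+1}=I(X_{t+1})$. So the fix is short, but it requires the robber's consistent trajectory to pass through the co-clique, contradicting your framing; you also still need the observation that $Y_t\cap I(X_{t+1})\ne\emptyset$ for \emph{every} cop move (not merely $|Y_t|>k$) to convert the invariant into an infinite uncaptured consistent trajectory. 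For what it is worth, the paper's own lower-bound proof is terser than yours and asserts the same ``stay on clique vertices'' heuristic without addressing this case, so your information-set formulation is the right way to make the argument rigorous --- it just needs the co-clique fallback spelled out.
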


Vertices of the clique $K_r$ are called \emph{clique vertices} and vertices of the co-clique $\overline{K_s}$ are called \emph{co-clique vertices}. (Note that each vertex in $K_r \vee \overline{K_s}$
is either a clique or co-clique vertex.) Lemma~\ref{vlem} shows that however many co-clique vertices we add, the cop number increases only by 1. Further, for every two clique vertices we add, we need
an additional cop.

\begin{proof}
For the upper bound in the case $r$ is even, we place $\frac{r}{2}$ cops in $K_r$ and one cop in $\overline{K_s}.$ The robber cannot choose a co-clique vertex as they would be visible and captured in
the next round. Hence, the robber must begin on a clique vertex and is captured by Theorem~\ref{tmax}. An analogous argument works when $r$ is odd, with the exception that if the robber is on a
clique vertex, the additional cop in $\overline{K_s}$ moves to $K_r$ to capture with the $\lfloor \frac{r}{2} \rfloor$ other cops.

The lower bound follows since the robber is always invisible as long as they stay on clique vertices. To see this, suppose we play with $k$ cops, where $k <\lfloor \frac{r}{2}\rfloor +1 .$ First,
consider $r$ even. The case $r$ is odd is analogous and so is omitted. If there are fewer than $\frac{r}{2}$ cops in $K_r$, then the robber stays invisible by starting on a clique vertex and no
matter the position of the cops, they cannot guarantee the location of the robber.
\end{proof}

\begin{proof} Let $(p_{n}:n\in \mathbb{N})$ be a sequence of rationals in $(0,1/2]$  such
that $\lim_{n\rightarrow \infty }p_{n}=r$, with $p_{0}=1/2$ such that $p_n<\frac 12$ for $n>0$. Let $G = K_{\aleph_0} \vee \overline{K_{\aleph_0} },$ where $K_{\aleph_0}$ is the countably infinite
clique.

We construct a chain $\mathcal{C}=(G_{n}:n\in \mathbb{N})$ in $G$ such that $G=\lim_{n\rightarrow \infty }G_{n}$, and with the property that $D_{H}(G_{n})=p_{n}$. Enumerate $V(G)$ as $\{x_{n}:n\in
\mathbb{N}\}$. We proceed inductively on $n$. For $n=0$, let $G_{0}$ be the subgraph induced by $x_{0}$ and three additional vertices so that $G_{0}$ has exactly two clique vertices and two co-clique
vertices. Hence, $G_0$ is isomorphic to $K_4$ minus an edge. Then we have that
\begin{equation*}
\frac{c_H(G_{0})}{|V(G_{0})|}=\frac 12 =p_{0}.
\end{equation*}

Fix $n\geq 1$, suppose the induction hypothesis holds for all $k\leq n$, and let $p_{n+1}=\frac{a}{b}$, where $a$ and $b$ are positive integers. Further suppose for an inductive hypothesis that
$\{x_{0},\ldots ,x_{n}\}\subseteq V(G_{n})$. Without loss of generality, as $r\in \lbrack 0,1/2]$ we may assume $0<2a<b$ and $\gcd (a,b)=1$.

We add vertices to $G_{n}$ in stages. Define $G_{n+1}^{\prime }$ to be the graph induced by $V(G_{n})\cup \{x_{n+1}\}$. Suppose that $c_H(G_{n+1}^{\prime
})=a^{\prime }$ and $|V(G_{n+1}^{\prime })|=b^{\prime }$. If $\frac{%
a^{\prime }}{b^{\prime }}=\frac{a}{b}$, then let $G_{n+1}=G_{n+1}^{\prime }$%
. Otherwise, we add some new vertices to adjust the density $D_{H}(G_{n+1}^{\prime })$. Note that $a' = \lfloor \frac{i}{2} \rfloor +1$ and $b'=i+j,$ for some positive integers $i$ and $j.$

We may assume that $\frac{a^{\prime }}{b^{\prime }}<\frac{a}{b}$ by adding an appropriate number of co-clique vertices. In this way, $b^{\prime }$ will become larger, while $a^{\prime }$ will not
change.

We add $x$ clique vertices and $y$ co-clique vertices to $G_{n+1}^{\prime }$ to form the induced subgraph $G_{n+1}$ so that
\begin{equation*}
D_{H}(G_{n+1})=\frac{c_H(G_{n+1})}{|V(G_{n+1})|}=\frac{a}{b}.
\end{equation*}
This is possible if we can solve the equation
\begin{equation*}
\frac{a}{b}=\frac{\lfloor (i+x)/2 \rfloor+1}{i+j+x+y}.
\end{equation*}
We find a solution where $x$ is even.  In this case this equation is equivalent to
\begin{equation}
x b/2 -ax -ay = ai + aj - \lfloor i/2 \rfloor b - b = \beta .  \label{la1}
\end{equation}
Note that $\beta >0$; otherwise, we may choose $i$ and $j$ large so this occurs.

There are two cases to consider. First, suppose $b$ is even. From (\ref{la1}), we obtain a linear Diophantine equation
\begin{equation*}
(b/2 - a)x-ay=\beta,
\end{equation*}
where $b/2-a > 0$ (recall that $2a<b$). As $\gcd (b/2-a,-a)=\gcd (a,b)=1$, (\ref{la1}) has infinitely many solutions. The general integer solution of (\ref{la1}) is
\begin{equation}
x=x_{0}-at,y=y_{0}-(b/2-a)t,  \label{ma}
\end{equation}%
where $(x_{0},y_{0})$ is a particular fixed solution, and $t$ is an integer.  Note that the coefficients of $t$ in (\ref{ma}) are both negative, so we may choose an appropriate $t<0$ to ensure an
integer solution of (\ref{la1}) $(x,y)$ with $x,y\geq 0$.  Further, since $\gcd(a,b)=1$ and $b$ is even, it follows that $a$ is odd.  Hence, $t$ may be chosen so that $x$ is even.

Now suppose $b$ is odd. From (\ref{la1}) we obtain a linear Diophantine equation
\begin{equation}
(b - 2a)x-2ay=2\beta, \label{mb}
\end{equation}
where $b/2-a > 0$.  As $\gcd (b-2a,-2a)=\gcd (a,b)=1$, (\ref{mb}) has infinitely many solutions. Further, rearranging (\ref{mb}) we obtain that $(b - 2a)x=2\beta+2ay$.  Hence, as $b-2a$ is odd,  it
follows that in any solution of (\ref{mb}), $x$ must be even. The general integer solution of (\ref{la1}) is
\begin{equation}
x=x_{0}-2at,y=y_{0}-(b-2a)t,  \label{mc}
\end{equation}%
where $(x_{0},y_{0})$ is a particular fixed solution, and $t$ is an integer. The coefficients of $t$ in (\ref{mc}) are both negative, so we may choose an appropriate $t<0$ to ensure an integer
solution of (\ref{la1}) $(x,y)$ with $x,y\geq 0$.  This completes the induction step in constructing $G_{n+1}$.

As $\{x_{0},\ldots ,x_{n}\}\subseteq V(G_{n})$ for all $n\in \mathbb{N}$, we have that $\mathcal{C=}(G_{n}:n\in \mathbb{N})$ is a full chain for $G$. Further,
\begin{equation*}
D_H(G,\mathcal{C})\mathcal{=}\lim_{n\rightarrow \infty }p_{n}=r. \qedhere
\end{equation*}

\end{proof}

\end{document}